\DeclareMathOperator{\dom}{dom}
\DeclareMathOperator{\inte}{int}
\DeclareMathOperator{\Sol}{Sol}
\DeclareMathOperator{\TCP}{TCP}
\DeclareMathOperator{\CP}{CP}
\DeclareMathOperator{\Ba}{\mathbb{B}}
\DeclareMathOperator{\A}{\mathcal{A}}
\DeclareMathOperator{\B}{\mathcal{B}}
\DeclareMathOperator{\C}{\mathcal{C}}
\DeclareMathOperator{\Q}{\mathcal{Q}}
\DeclareMathOperator{\Ro}{\mathcal{R}_0}
\DeclareMathOperator{\Oo}{\mathcal{O}}
\DeclareMathOperator{\Sa}{\mathbb{S}}
\DeclareMathOperator{\rank}{rank}
\DeclareMathOperator{\R}{\mathbb{R}}
\begin{document}

\title{On the R$_0$--tensors and the solution map of tensor complementarity problems}
\author{Vu Trung Hieu}

\institute{Vu Trung Hieu \at Division of Mathematics, Phuong Dong University\\ 171 Trung Kinh Street, Cau Giay, Hanoi, Vietnam\\
E-mail: hieuvut@gmail.com}

\date{Received: date / Accepted: date}
\maketitle

\medskip
		
\begin{abstract} Our purpose is to investigate the local boundedness, the upper semicontinuity, and the stability of the solution map of tensor complementarity problems. To do this, we focus on the set of R$_0$--tensors and show that this set plays an important role in the investigation. Furthermore, by using a technique in semi-algebraic geometry, we obtain some results on the finite-valuedness and the lower semicontinuity of the solution map.
\end{abstract}
		
\keywords{R$_0$--tensor \and Tensor complementarity problem \and Solution map \and Finite-valuedness \and Local boundedness \and Upper semicontinuity \and  Stability \and Semi-algebraic set}

\subclass{90C33 \and 14P10}
	
\section{Introduction} 
The tensor complementarity problems were firstly introduced by Song and Qi in \cite{SongQi2014,SongQi2015}. These problems have attracted a lot of attention of researchers (see \cite{QCC2018} and the references given there).
 It is well-known that Huang and Qi have presented an explicit relationship between  $n$-person noncooperative games and tensor complementarity problems \cite{HuangQi17}. 
Structured tensors and different properties of their solution sets have been intensively investigated \cite{LQX2017,SongQi2016,SongYu2016,ZW2018}. The global uniqueness and solvability for tensor complementarity problems have been discussed in \cite{BHW2016} and \cite{LLW2017}. Methods and algorithms to solve a tensor complementarity problem have been interested by several authors \cite{DZ2018,LLW2017,XLX2017}. 

The involved function in a tensor complementarity problem is the sum of a homogeneous polynomial function and an arbitrary given vector. Thus, the tensor complementarity problem is a special case of the homogeneous complementarity problem which was mentioned in a work of Oettli and Yen \cite{OettliYen95}.  Besides,  the tensor complementarity problems also is a subclass of the polynomial  complementarity problems which have been recently introduced by Gowda \cite{Gowda16}.  
The tensor complementarity problem is a natural extension of the linear complementarity problem \cite{CPS1992}, hence, several properties of both problems are similar.  The boundedness, the continuity and the stability of the solution map of linear complementarity problems have been deeply investigated (see, e.g., \cite{CPS1992,Gowda1992,OettliYen95,Phung2002,Robinson79}).
We will investigate these properties of the solution map of tensor complementarity problems and simultaneously show that the set of R$_0$--tensors plays an important role in the investigation.

In the paper, firstly, we prove that the set of R$_0$--tensors is open. Accordingly, the local boundedness of the solution map is shown. Secondly, since the involved function in a tensor complementarity problem is polynomial, by using tools in semi-algebraic geometry,  we obtain the generic finite-valuedness of the solution map of tensor complementarity problems.  Consequently, a necessary condition for the lower semicontinuity of the solution map is obtained.  Furthermore, we show that the set of R$_0$--tensors is generic semi-algebraic in the set of real tensors. A lower bound for the dimension of the complement of R$_0$--tensors is obtained. Finally, the paper shows a closed relation between the upper semicontinuity of the solution map and the R$_0$ property of the involved tensors. A result on the stability of the solution map is introduced.

The organization of the paper is as follows.  Section 2 gives a brief introduction to tensor complementarity problems and semi-algebraic geometry. Main results are presented in the  next four sections. Section 3 investigates the openness of the set of R$_0$--tensors and the local boundedness of the solution map. Besides, this section proves the semi-algebraicity  and the genericity of the set R$_0$--tensors.
The finite-valuedness and the lower semicontinuity are discussed in Section 4. The last two sections give results on the upper semicontinuity and the stability of the solution map.

\section{Preliminaries}
In this section, we will recall some definitions, notations, and auxiliary results on tensor complementarity problems and semi-algebraic geometry.
\subsection{Tensor complementarity problems}
The scalar product of two vectors $x, y$ in the Euclidean space $\R^n$ is denoted by $\langle x,y\rangle$. Let $F: \R^n\to\R^n$ be a vector-valued function.
The \textit{nonlinear complementarity problems} defined by $F$ is the problem
$$\CP(F) \qquad {\text{Find}}\ \, x\in \R^n \ \; \text{such that}\ \, \ x\geq 0, \; F(x) \geq 0, \; \langle x,F(x)\rangle=0.$$
The solution set is denoted by $\Sol(F)$.

The following remark shows that a solution of a complementarity problem
can be characterized by using some Lagrange multipliers. 

\begin{remark}\label{KKT} A vector $x$ solves $\CP(F)$ if and only if there exists a vector $\lambda\in \R^n$ such that the following system is satisfied
	\begin{equation*}\label{KKT_formula}
	\left\lbrace \begin{array}{r}
	F(x)-\lambda=0,\\ 
	\langle \lambda,x \rangle=0,\\ \lambda\geq 0,\; x \geq 0.
	\end{array}\right.
	\end{equation*}
\end{remark}

To find the solution set of a complementarity problem, we will find the solutions on each pseudo-face of $\R^n_+$.  For every index set $\alpha\subset [n]=\{1,\dots,n\}$, we associate this set with the following \textit{pseudo-face}
$$K_{\alpha}=\left\lbrace x\in \R_+^n:x_i=0, \forall i\in\alpha; \; x_i >0, \forall i\in [n]\setminus\alpha\right\rbrace.$$
The subsets $K_{\alpha},\alpha\subset [n],$ is a finite disjoint decomposition of $\R^n_+$.
Therefore, we have 
\begin{equation}\label{quasiface}
\Sol(F)=\bigcup_{\alpha\subset [n]}\left[ \Sol(F)\cap K_{\alpha}\right].
\end{equation}

Throughout this paper, we assume that $m$ and $n$ are given integers, and  $m,n\geq2$. An $m$-th order $n$-dimensional \textit{tensor} $\A= (a_{i_1\cdots i_m})$ is a multi-array of real entries $a_{i_1\cdots i_m}\in\R$, where $i_j\in[n]$  and $j\in[m]$. The set of all real $m$-th order $n$-dimensional
tensors is denoted by $\R^{[m,n]}$. 
For any tensor $\A= (a_{i_1\cdots i_m})$,  the Frobenius norm of  $\A$ is defined and denoted as
$$\|\A\|:=\sqrt{\sum_{i_1,i_2,...,i_m=1}^{n}a^2_{i_1i_2\cdots i_m}}.$$
This norm can also be considered as a vector norm. So, the  norm of $(\A,a)$ in $\R^{[m,n]}\times\R^{n}$ can be defined as follows
$$\|(\A,a)\|:=\sqrt{\|\A\|^2+\sum_{i=1}^{n}a^2_i}.$$
Clearly, $\R^{[m,n]}$ is a real vector space of dimension $n^m$. Here, each tensor $\A\in\R^{[m,n]}$ is a real vector having ${n^m}$ components. 
If $m=2$ then $\R^{[2,n]}$ is the space of $n\times n$--matrices which is isomorphic to $\R^{n\times n}$. Note that if $\A= (a_{i_1\cdots i_m})$ and $\B= (b_{i_1\cdots i_m})$ are tensors in $\R^{[m,n]}$ then the sum of them is defined by
$$\A+\B=(a_{i_1\cdots i_m}+b_{i_1\cdots i_m}).$$

For any $x = (x_1,..., x_n)^T\in \R^n$, 
$\A x^{m-1}$ is a vector whose
$i$-th component defined by 
\begin{equation}\label{Axm_1}(\A x^{m-1})_i:=\sum_{i_2,...,i_m=1}^{n}a_{ii_2\cdots i_m}x_{i_2}\cdots x_{i_m}, \ \forall i\in[n],
\end{equation}
and $\A  x^m$ is a polynomial of degree $m$, defined by
$$\A x^{m}:=\langle x,Ax^{m-1} \rangle=\sum_{i_1,i_2,...,i_m=1}^{n}a_{i_1i_2\cdots i_m}x_{i_1}x_{i_2}\cdots x_{i_m}.$$
The polynomials $(Ax^{m-1})_i$ and $\A x^{m}$ are homogeneous of degree respectively $m-1$ and $m$, that is $\A(tx)^{m-1}=t^{m-1}( \A x^{m-1}) $ and $\A (tx)^{m}=t^m(\A x^m)$ for all $t\geq 0$ and $x\in\R^n$.

\begin{remark}\label{norm_beta}
By the continuity of the polynomial function $\A x^{m-1}$, if $U$ is an bounded set in $\R^n$ then there exists $\beta>0$ such that 
	$\|\A x^{m-1}\|\leq \beta\|\A\|$
	for all $x\in U$.
\end{remark}

Let $\A\in\R^{[m,n]}$ and $a \in\R^n$ be given. If $F(x)=\A x^{m-1}+ a$ then one says that $\CP(F)$ is a \textit{tensor complementarity problem} which defined by $\A$ and $a$. This problem and its solution set are denoted respectively by $\TCP(\A,a)$ and $\Sol(\A,a)$.
By definition, $x$ solves $\TCP(\A,a)$ if and only if
\begin{equation}\label{TCP}
x\geq 0, \ \A x^{m-1}+a\geq 0, \ \A x^{m}+\left\langle x,a \right\rangle =0.
\end{equation}
Clearly, the vector $0$ solves $\TCP(\A,a)$ for all $a\in\R^n_+$. The solution map of tensor complementarity problems is denoted and defined by
\begin{equation}\label{Sol}
\Sol:\R^{[m,n]}\times\R^{n}\rightrightarrows \R^n, \ (\A,a)\mapsto\Sol(\A,a).
\end{equation}

A subset $K\subset \R^n$ is called a cone \cite[p. 89]{Bernstein} if $\lambda >0$ and $x\in K$ then $\lambda x\in K$. The cone $K$ is bounded if and only if $K=\{0\}$.

\begin{remark} The solution set of $\TCP(\A,0)$ is a closed cone. Indeed, suppose that $x$ is a solution of $\TCP(\A,0)$. For each $t\geq0$, from \eqref{TCP}, one has
	$$tx\geq 0, \ \; \A(tx)^{m-1}=t^{m-1}(\A x^{m-1}) \geq 0, \ \; \A(tx)^{m}=t^{m}(\A x^m)=0.$$
	By definition, $tx$ solves $\TCP(\A,0)$.  This shows that $\Sol(\A,0)$ is a cone. The closedness of $\Sol(\A,0)$ is implied from the continuity of $F(x)=\A x^{m-1}$.
\end{remark}

Let us recall that $\A$ is an R$_0$--tensor (sometimes, we say that $\A$ is  R$_0$ or $\A$ has R$_0$ property) if $\Sol(\A,0)=\{0\}$. We denote $\Ro$ to be the set all $m$-th order $n$-dimensional R$_0$--tensors and $\mathcal{O}\in \R^{[m,n]}$ to be the zero tensor. The complement of $\Ro$ is denoted and defined by $$C(\Ro)=\R^{[m,n]}\setminus\Ro.$$ Clearly, $\mathcal{O}$ belongs to $C(\Ro)$ since $\Sol(\mathcal{O},0)=\R^n_+$.

\subsection{Semi-algebraic geometry}

Recall a subset in $\R^n$ is \textit{semi-algebraic} if it is the union of finitely
many subsets of the form
\begin{equation*}\label{basicsemi}
	\big\{x\in \R^n\,:\,f_1(x)=...=f_\ell(x)=0,\ g_{\ell+1}(x)<0,\dots,g_m(x)<0\big\},
\end{equation*}
where $\ell,m$ are natural numbers, and $f_1,\dots, f_\ell, g_{\ell+1},\dots,g_m$ are polynomials with real coefficients. The semi-algebraic property is preserved by taking finitely union, intersection, minus and taking closure of semi-algebraic sets. The well-known Tarski-Seidenberg Theorem states that the image of a semi-algebraic set under a linear projection is a semi-algebraic set.

There are some ways to define the dimension of a semi-algebraic set. Here, we choose the geometric approach which is represented in \cite[Corollary 2.8.9]{BCF98}. If $S\subset \R^n$ is a semi-algebraic set, then there exists a decomposition of $S$ into a disjoint union of semi-algebraic subsets \cite[Theorem 2.3.6]{BCF98} $$S=\bigcup_{i=1}^sS_i,$$ where each $S_i$ is semi-algebraically diffeomorphic to $(0,1)^{d_i}$.  
Here, let $(0,1)^{0}$ be a point,  $(0,1)^{d_i}\subset \R^{d_i}$ is the set of points $x=(x_1,\dots,x_{d_i})$ such that $x_j\in (0,1)$ for all $j=1,\dots,d_i$. The \textit{dimension} of $S$ is, by definition,
$$\dim(S):=\max\{d_1,...,d_s\}.$$		
The dimension is well-defined and not depends on the decomposition of $S$. We would like remind that if the dimension of a nonempty semi-algebraic set $S$ is zero  then  $S$ has finitely many points. 

\begin{remark}\label{dense}	For a subset $S$ of $\R^n$, the following notions are different: $S$ is full measure, meaning its complement $\R^n\setminus S$ has measure zero, and $S$ is topologically
	generic, meaning it contains a countable intersection of dense and
	open sets. However, for semi-algebraic sets, the
	following properties are equivalent (see, e.g., \cite[Lemma 2.3]{DHP16}): $S$ is dense in $\R^n$; $\R^n\setminus S$ has measure zero; and $\dim(\R^n\setminus S)<n$. If the semi-algebraic set $S$ satisfies one of three previous properties, then one says this set is generic.
\end{remark}

We will use the Tarski-Seidenberg Theorem in the third form in the next section. To present the theorem, we have to describe semi-algebraic sets via the language of first-order formulas. A \textit{first-order formula} (with parameters in $\R$) is
obtained by the induction rules \cite{Coste02}:
\begin{enumerate}
	\item[(i)] If $p \in\R[X_1, . . . , X_n]$, then $p > 0$ and $p= 0$ are first-order formulas;
	\item[(ii)] If $P,Q$ are first-order formulas, then ``$P$ \textit{and} $Q$'', ``$P$ \textit{or} $Q$'', ``\textit{not} $Q$'', which are denoted respectively by $P \wedge Q$, $P \vee Q$, and $\neg Q$, are first-order formulas;
	\item[(iii)] If $Q$ is a first-order formula, then $\exists X\, Q$ and $\forall X\, Q$, where $X$ is a variable ranging over $\R$, are first-order formulas.
\end{enumerate}
Formulas obtained by using only the rules (i) and (ii) are called \textit{quantifier-free	formulas}. A subset $S \subset \R^n$ is semi-algebraic if and only if there is a quantifier-free formula $Q_S(X_1,...,X_n)$ such that
$$(x_1,...,x_n) \in S\ \; \text{if and only if }\; Q_S(x_1,..., x_n).$$
In this case, $Q_S(X_1,...,X_n)$ is said to be a \textit{quantifier-free formula defining $S$.}

\begin{remark}
	The Tarski-Seidenberg Theorem in the third form \cite[Theorem~2.6]{Coste02}, says that if $Q(X_1,...,X_n)$ is a first-order formula, then the set $$S=\big\lbrace (x_1,...,x_n)\in\R^n: \ Q(x_1,...,x_n) \big\rbrace $$ is a semi-algebraic set. 
\end{remark}

\section{The set of R$_0$--tensors}
In this section, we prove that the set $\Ro$ of R$_0$--tensors is open in $\R^{[m,n]}$. Accordingly, the local boundedness of the solution map is shown. Moreover, we show that $\Ro$ is generic semi-algebraic. The dimension of the complement $C(\Ro)$ is discussed in the last subsection.

\subsection{Local boundedness of the solution map}

\begin{proposition}\label{open_cone} The set $\Ro$ of all R$_0$--tensors is open in $\R^{[m,n]}$.
\end{proposition}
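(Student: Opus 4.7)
The plan is to show that the complement $C(\Ro)$ is closed in $\R^{[m,n]}$ by a direct sequential argument exploiting the homogeneity of $\TCP(\A,0)$.

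First I would assume, for contradiction, that $\A \in \Ro$ is not an interior point. Then there exists a sequence $\A_k \to \A$ with $\A_k \notin \Ro$, so for each $k$ we can pick a nonzero solution $x_k \in \Sol(\A_k,0)$. The crucial reduction is to normalize: since the preceding remark establishes that $\Sol(\A_k,0)$ is a cone, I can replace $x_k$ by $x_k/\|x_k\|$ and assume $\|x_k\| = 1$ for every $k$. Without the homogeneity/cone property this step would fail, so it is the essential structural input.

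Next I would apply compactness of the unit sphere in $\R^n$ to extract a subsequence (not relabeled) with $x_k \to x^*$, where $\|x^*\|=1$, in particular $x^* \neq 0$. It then remains to show $x^* \in \Sol(\A,0)$, which will contradict $\A \in \Ro$ and finish the proof. Taking limits in the three defining conditions of \eqref{TCP} with $a=0$:
\begin{itemize}
\item $x_k \ge 0$ passes to $x^* \ge 0$ because $\R^n_+$ is closed;
\item the map $(\B,y)\mapsto \B y^{m-1}$ is a polynomial, hence continuous on $\R^{[m,n]}\times\R^n$, so $\A_k x_k^{m-1} \to \A (x^*)^{m-1}$ and the inequality $\A_k x_k^{m-1}\ge 0$ passes to $\A (x^*)^{m-1}\ge 0$;
\item similarly $\langle x_k, \A_k x_k^{m-1}\rangle \to \langle x^*, \A (x^*)^{m-1}\rangle$, so the scalar equation $\A_k x_k^m = 0$ gives $\A (x^*)^m = 0$.
\end{itemize}
Hence $x^*\in \Sol(\A,0)\setminus\{0\}$, contradicting the assumption $\A\in\Ro$.

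No step is really hard; the only point to be careful about is articulating why we can normalize the solutions $x_k$. This is where the cone property of $\Sol(\A,0)$ proved in the previous remark is used decisively, and without it the sequence $\{x_k\}$ might escape to infinity and the compactness argument would collapse. Everything else is standard use of continuity of polynomial maps and closedness of $\R^n_+$.
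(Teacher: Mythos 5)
Your proof is correct and follows essentially the same route as the paper: a sequential compactness argument on normalized nonzero solutions of $\TCP(\A_k,0)$, using the homogeneity of $x\mapsto \A x^{m-1}$ (via the cone property of $\Sol(\A_k,0)$) to justify the normalization, and continuity of the polynomial map to pass to the limit. The only cosmetic difference is that the paper divides the defining conditions by $\|x^k\|^{m-1}$ and $\|x^k\|^{m}$ instead of normalizing the solutions first; these are the same use of homogeneity.
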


\begin{proof} If the complement $C(\Ro)$ is closed, then the set $\Ro$ is open. So, we need only prove the closedness of $C(\Ro)$. Let $\{\A^k\}\subset C(\Ro)$ be a convergent sequence with $\A^k\to A$.
	On the contrary, we suppose that $\A\in\Ro$. For each $k$, $\Sol(\A^k,0)$ is unbounded. There exists an unbounded sequence  $\{x^k\}$ such that $x^k\in\Sol(\A^k,0)$ and $x^k\neq 0$ for each $k$. Without loss of generality we can assume that 
	$\|x^k\|^{-1}x^k\to\bar x$ and $\|\bar x\|=1.$
By definition, one has $$\A^k(x^k)^{m-1}\geq 0, \ \A^k(x^k)^m=0.$$ 
Dividing these ones by $\|x^k\|^{m-1}$ and $\|x^k\|^{m}$, respectively,  and taking $k\to+\infty$, we obtain 
	$\A(\bar x)^{m-1}\geq0$ and $\A(\bar x)^m=0.$
	It follows that $\bar x \in \Sol(\A,0)=\{0\}$. This contradicts to $\|\bar x\|=1$. Therefore, $\A$ must be in $\C(\Ro)$. Hence, $C(\Ro)$ is closed. The proof is complete. \qed
\end{proof}

\begin{remark}The set $\Ro$ is a cone in $\R^{[m,n]}$. Indeed, for any $t>0$, one has
	$$(t\A) x^{m-1}=t^{m-1} (\A x^{m-1}), \ (t\A)x^{m}=t^{m} (\A x^{m}).$$	
	This leads to $	\Sol(t\A,0)=\Sol(\A,0)$. Hence, 
	$\A\in \Ro$ if and only if  $t\A\in \Ro$. This implies that $\Ro$  is a cone. 
\end{remark}

The boundedness of solution sets of tensor complementarity problems and polynomial complementarity problems under the R$_0$ condition is mentioned in \cite{SongYu2016} and \cite{Gowda16}. 
Based on the openness of the set $\Ro$, we show that the the solution map is locally bounded.

Here, $\Ba(\Oo,\varepsilon)$  stands for the closed ball in $\R^{[m,n]}$ centered at $\Oo$ with radius $\varepsilon$. Similarly, $\Ba(0,\delta)$ is the closed ball in $\R^n$ centered at $0$ with radius $\delta$.
 
\begin{theorem}\label{bounded1} The following two statements are equivalent:
	\begin{description}
		\item[\rm (a)] The tensor $\A$ is R$_0$;
			\item[\rm (b)] There exists $\varepsilon>0$ such that the following set 
			$$S(\varepsilon,\delta):=\bigcup_{(\B,b)\in \Ba(\Oo,\varepsilon)\times \Ba(0,\delta)} \Sol(\A+\B, a+b),$$
			is bounded, for any $\delta>0$ and $a\in\R^n$.
	\end{description}
\end{theorem}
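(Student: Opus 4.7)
The plan is to prove the two implications separately, with (b)$\Rightarrow$(a) being essentially immediate from earlier remarks and (a)$\Rightarrow$(b) being a normalization-and-compactness argument parallel to the proof of Proposition \ref{open_cone}.

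For the direction (b)$\Rightarrow$(a), I would specialize the hypothesis to the trivial perturbation: take $a=0$, $\delta>0$ arbitrary, and pick $(\B,b)=(\Oo,0)\in\Ba(\Oo,\varepsilon)\times\Ba(0,\delta)$. Then $\Sol(\A,0)\subset S(\varepsilon,\delta)$ is bounded. But the remark just before Proposition \ref{open_cone} shows that $\Sol(\A,0)$ is a cone in $\R^n$, and the paragraph defining cones notes that a bounded cone reduces to $\{0\}$. Hence $\Sol(\A,0)=\{0\}$, i.e.\ $\A$ is R$_0$.

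For the direction (a)$\Rightarrow$(b), I would start by invoking Proposition \ref{open_cone}: since $\Ro$ is open and $\A\in\Ro$, there is $\varepsilon>0$ such that $\A+\B\in\Ro$ for every $\B\in\Ba(\Oo,\varepsilon)$. Fix any $\delta>0$ and $a\in\R^n$ and argue by contradiction: if $S(\varepsilon,\delta)$ is unbounded, pick sequences $\B^k\in\Ba(\Oo,\varepsilon)$, $b^k\in\Ba(0,\delta)$, and $x^k\in\Sol(\A+\B^k,a+b^k)$ with $\|x^k\|\to\infty$. By compactness of $\Ba(\Oo,\varepsilon)$, $\Ba(0,\delta)$ and of the unit sphere in $\R^n$, extract subsequences with $\B^k\to\B\in\Ba(\Oo,\varepsilon)$, $b^k\to b$, and $\|x^k\|^{-1}x^k\to\bar{x}$ with $\bar{x}\geq 0$ and $\|\bar x\|=1$. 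From the defining inequalities in \eqref{TCP},
\[
(\A+\B^k)(x^k)^{m-1}+(a+b^k)\geq 0, \qquad (\A+\B^k)(x^k)^m+\langle x^k,a+b^k\rangle=0,
\]
I would divide the first by $\|x^k\|^{m-1}$ and the second by $\|x^k\|^m$. Because $m\geq 2$ and $a+b^k$ stays bounded, the inhomogeneous terms vanish in the limit, and by continuity of the polynomial maps I get $(\A+\B)(\bar x)^{m-1}\geq 0$ and $(\A+\B)(\bar x)^m=0$, i.e.\ $\bar x\in\Sol(\A+\B,0)$. Since $\A+\B\in\Ro$, this forces $\bar x=0$, contradicting $\|\bar x\|=1$.

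The main technical point, and the only place where something could go wrong, is verifying that the inhomogeneous perturbation $a+b^k$ truly becomes negligible after rescaling. This is where the assumption $m\geq 2$ (fixed at the start of the paper) is crucial: $\|a+b^k\|\leq \|a\|+\delta$ is bounded, so both $\|a+b^k\|/\|x^k\|^{m-1}$ and $|\langle x^k,a+b^k\rangle|/\|x^k\|^m\leq (\|a\|+\delta)/\|x^k\|^{m-1}$ tend to $0$. Everything else is a routine passage to the limit using the homogeneity identities for $\A x^{m-1}$ and $\A x^m$ recalled at the start of Section 2.
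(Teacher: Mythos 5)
Your proof is correct and follows essentially the same route as the paper: for (a)$\Rightarrow$(b) the same openness-plus-normalization contradiction argument, and for (b)$\Rightarrow$(a) the same specialization to $a=0$ giving $\Sol(\A,0)\subset S(\varepsilon,\delta)$ bounded. The only difference is that you spell out explicitly why the inhomogeneous term vanishes after rescaling and why a bounded cone must be $\{0\}$, steps the paper leaves implicit.
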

\begin{proof} $\rm(a) \Rightarrow (b)$ Let $\A$ be an R$_0$--tensor. Since the set $\Ro$ is open in $\R^{[m,n]}$, due to Proposition \ref{open_cone}, there exists  $\varepsilon>0$ such that $\A+\Ba(\Oo,\varepsilon)\subset\Ro$. 	
We suppose that there exists $a\in\R^n$ and $\delta>0$ such that the set $S(\varepsilon,\delta)$ is unbounded. Let $\{x^k\}$ be an unbounded sequence and $\{(\B^k,b^k)\}$ be a sequence in $\Ba(\Oo,\varepsilon)\times \Ba(0,\delta)$ satisfying
	\begin{equation}\label{xk_sol}
	x^k\neq 0, \ \|x^k\|^{-1}x^k\to\bar x,\ x^k\in\Sol( \A+\B^k,a+b^k).
	\end{equation}
By the compactness of the sets $\A+\Ba(\Oo,\varepsilon)$ and $a+\Ba(0,\delta)$,  we can assume that $$\A+\B^k\to\bar\A\in \A+\Ba(\Oo,\varepsilon), \ a+b^k\to \bar a\in a+\Ba(0,\delta).$$
From \eqref{xk_sol}, it is easy to check that $\bar x$ solves $\TCP(\bar\A,0)$. Since $\|\bar x\|=1$, $\bar\A$ is not R$_0$. This contradicts to the fact that $\bar\A$ belongs to $\A+\Ba(\Oo,\varepsilon)\subset\Ro$.  

$\rm(b) \Rightarrow (a)$ Suppose that there exists $\varepsilon>0$ such that $S(\varepsilon,\delta)$ is bounded for any $a\in\R^n$ and $\delta>0$. Clearly, one has $\Sol(\A,0)\subset S(\varepsilon,\delta)$. This implies that $\Sol(\A,0)$ is bounded, namely, $\A$ is an R$_0$--tensor. 
The proof is complete.\qed
\end{proof}

\begin{remark}
	The tensor $\A$ is R$_0$ if and only if $\Sol(\A,a)$ is bounded for every $a\in\R^n$ (see  \cite[Theorem 3.2]{SongYu2016}). Moreover, $\A$ is an R$_0$--tensor if and only if the set
$$\bigcup_{b\in \Ba(0,\delta)} \Sol(\A,a+b)$$ 
	is bounded, for every $a\in\R^n$ and $\delta>0$ \cite[Proposition 2.1]{Gowda16}. 
\end{remark}

Recall that the set-valued map $\Psi:\R^m\rightrightarrows\R^n$ is \textit{locally bounded} at $\bar x$ if there exists an open neighborhood $U$ of $\bar x$ such that the set $\cup_{x\in U} \Psi(x)$ is bounded.

\begin{corollary}
	The following two statements are equivalent:
	\begin{description}
		\item[\rm (a)] 	The tensor $\A$ is R$_0$;
		\item[\rm (b)] The solution map $\Sol$ is locally bounded at $(\A,a)$, for every $a\in\R^n$.
		\end{description}
\end{corollary}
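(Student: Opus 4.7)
The corollary is essentially a reformulation of Theorem \ref{bounded1} in the standard language of local boundedness for set-valued maps, so my plan is to deduce it as an immediate consequence of that theorem together with the remark that $\Sol(\A,0)$ is a closed cone.

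For the implication (a)$\Rightarrow$(b), I would fix an arbitrary $a\in\R^n$ and apply Theorem \ref{bounded1}(b) to obtain $\varepsilon>0$ such that, for every $\delta>0$, the set $S(\varepsilon,\delta)$ is bounded. I would then observe that a basic open neighborhood of $(\A,a)$ in $\R^{[m,n]}\times\R^n$ can be taken to be $U=\inte\bigl(\A+\Ba(\Oo,\varepsilon)\bigr)\times\inte\bigl(a+\Ba(0,\delta)\bigr)$ for any choice $\delta>0$. By construction, $\bigcup_{(\A',a')\in U}\Sol(\A',a')\subseteq S(\varepsilon,\delta)$, which is bounded, so $\Sol$ is locally bounded at $(\A,a)$.

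For the converse (b)$\Rightarrow$(a), it suffices to specialize the local boundedness hypothesis to a single point, namely $(\A,0)$. If $U$ is a neighborhood of $(\A,0)$ with $\bigcup_{(\A',a')\in U}\Sol(\A',a')$ bounded, then in particular $\Sol(\A,0)$ is a subset of this bounded set. Since the earlier remark shows $\Sol(\A,0)$ is a cone, its boundedness forces $\Sol(\A,0)=\{0\}$, i.e.\ $\A\in\Ro$.

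The hard part is essentially nonexistent here: the only point requiring care is to phrase an arbitrary neighborhood of $(\A,a)$ in terms of the product balls $\Ba(\Oo,\varepsilon)\times\Ba(0,\delta)$ used in Theorem \ref{bounded1}, which is routine since such products form a neighborhood base in the product topology. Thus the corollary follows in a few lines once Theorem \ref{bounded1} is in hand.
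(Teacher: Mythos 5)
Your proposal is correct and follows essentially the same route as the paper: both directions reduce immediately to Theorem \ref{bounded1}, with the forward implication realized by a product-ball neighborhood contained in $S(\varepsilon,\delta)$ and the converse obtained by specializing to $a=0$ and using that a bounded cone must be $\{0\}$. Your use of interiors of the closed balls is in fact slightly more careful than the paper, which labels the product of closed balls an open neighborhood.
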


\begin{proof}
$\rm(a) \Rightarrow (b)$ Suppose that $\A$ is an R$_0$--tensor. By Theorem \ref{bounded1}, there exists $\varepsilon>0$ such that, for every $a\in\R^n$, the following set is bounded
$$S(\varepsilon,\varepsilon)=\bigcup_{(\B,b)\in U}\Sol(\B,b),$$
where $U=(\A,a)+\Ba(\Oo,\varepsilon)\times \Ba(0,\varepsilon)$ is an open neighborhood of $(\A,a)$. This means that the map $\Sol$ is locally bounded at $(\A,a)$.

 $\rm(b) \Rightarrow (a)$ Suppose that the assertion in $\rm(b)$ is true. Taking $a=0$, there exists an open neighborhood $U$ of $(\A,0)$ such that 
$$\Sol(\A,0)\subset \bigcup_{(\B,b)\in U} \Sol(\B,b)$$
is bounded. It follows that $\Sol(\A,0)=\{0\}$. The proof is complete. \qed
\end{proof}

\subsection{Semi-algebraicity and genericity of $\Ro$}

\begin{proposition}
	The set $\Ro$ is semi-algebraic in $\R^{[m,n]}$.
\end{proposition}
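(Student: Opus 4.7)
The plan is to apply the Tarski--Seidenberg Theorem (third form) recalled at the end of Section~2. The key observation is that the defining condition ``$\Sol(\A,0)=\{0\}$'' is naturally a first-order statement in the entries of $\A$, because the components $(\A x^{m-1})_i$ and $\A x^m$ are polynomials in the $n^m$ entries of $\A$ together with $x_1,\dots,x_n$.

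First, I would write down a quantifier-free formula that describes membership of a fixed $x\in\R^n$ in $\Sol(\A,0)$: this is simply the conjunction
$$\bigwedge_{i=1}^n (x_i\geq 0) \ \wedge\ \bigwedge_{i=1}^n \big((\A x^{m-1})_i\geq 0\big) \ \wedge\ (\A x^m = 0),$$
where ``$p\geq 0$'' abbreviates ``$p>0 \vee p=0$'' (which is allowed by rules (i)--(ii) of the first-order vocabulary). Call this formula $P(\A,x)$. Then $\A\in\Ro$ is equivalent to the first-order formula
$$Q(\A)\;:\;\forall x_1\cdots\forall x_n\ \Big(P(\A,x)\ \Longrightarrow\ \bigwedge_{i=1}^n (x_i=0)\Big),$$
with free variables ranging over the $n^m$ entries of $\A$. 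The implication is built using $\neg$ and $\vee$, and the universal quantifiers are allowed by rule (iii), so $Q(\A)$ is a legitimate first-order formula. Applying Tarski--Seidenberg in the third form yields that $\{\A\in\R^{[m,n]}:Q(\A)\}=\Ro$ is semi-algebraic.

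There is no real obstacle: the only thing to check carefully is the bookkeeping that the coefficients of each polynomial in $P(\A,x)$ depend polynomially (in fact multilinearly) on the entries of $\A$, so that $P(\A,x)$ genuinely lies in the first-order language over $\R$ with the entries of $\A$ and the $x_i$ as variables. Once that is clear, the conclusion is immediate from the Tarski--Seidenberg principle. Alternatively, one could give a more constructive argument by decomposing $\R^n_+$ into the pseudo-faces $K_\alpha$ from \eqref{quasiface} and writing $\Sol(\A,0)\cap K_\alpha$ as the projection of a basic semi-algebraic set defined by the polynomial equations and inequalities corresponding to the active indices; but the first-order approach above is more concise.
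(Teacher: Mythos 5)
Your proposal is correct and follows essentially the same route as the paper: encode the condition $\Sol(\A,0)=\{0\}$ as a first-order formula in the entries of $\A$ (quantifying universally over $x$) and invoke the Tarski--Seidenberg Theorem in the third form. If anything, your implication form $\forall x\,\bigl(P(\A,x)\Rightarrow x=0\bigr)$ is the logically cleaner rendering of what the paper writes, since the paper's displayed formula $\forall x\,\bigl(Q_K(x)\wedge[Q_1\vee Q_2]\bigr)$ should really be an implication $Q_K(x)\Rightarrow[Q_1\vee Q_2]$ rather than a conjunction.
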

\begin{proof} Remind that $\A\in\Ro$ if $\Sol(\A,0)=\{0\}$. Since $0$ always belongs to $\Sol(\A,0)$, the set $\Ro$ can be described as follows:
	\begin{equation}\label{R0}
		\begin{array}{cl}
			\Ro&=\left\lbrace\A\in\R^{[m,n]}: \nexists x\in \R^n_+\setminus\{0\} \left( \left[\A x^{m-1}\geq 0\right]\wedge \left[ \A x^{m}=0  \right]\right)   \right\rbrace \medskip\\
			&=\left\lbrace\A\in\R^{[m,n]}: \forall x\in \R^n_+\setminus\{0\}\left(  \left[\A x^{m-1}\ngeq 0\right]\vee \left[ \A x^{m}\neq 0  \right] \right)  \right\rbrace.
		\end{array}
	\end{equation}
Because $\R^n_+$ and $\{0\}$ are semi-algebraic, the set $K=\R^n_+\setminus\{0\}$ is also a semi-algebraic set in $\R^n$. Let $Q_K(x)$ be the quantifier-free formula defining $K$. Since $\left( \A x^{m-1}\right)_i$, where $i=1,...,n,$ and $\A x^{m}$ are polynomials, the following  formulas 
	$$Q_1(\A,x)=\bigvee_{i=1}^{m}\left[ \left( \A x^{m-1}\right)_i<0\right]$$
	and 
	$$Q_2(\A,x)=\left[ \A x^{m}>0\right]  \vee\left[ \A x^{m}<0\right]$$
	are quantifier-free. From the last equation in \eqref{R0}, $\A\in \Ro$ if and only if $Q(\A)$, where $Q(\A)$ is the following first-order formula
	$$Q(\A):=\forall x \left( Q_K(x) \wedge\left[Q_1(\A,x) \vee Q_2(\A,x) \right] \right) .$$
	According to the Tarski-Seidenberg Theorem in the third form, $\Ro$ is a semi-algebraic set in $\R^{[m,n]}$. \qed
\end{proof}

Let $\Phi:X \to Y$ be a differentiable map between manifolds,  where $X\subset \R^m$ and $Y\subset \R^n$. A point $y \in Y$ is called a \textit{regular value} for $\Phi$ if either the level set $\Phi^{-1}(y)=\emptyset$ or the derivative
map 
$$D\Phi(x): T_xX\to T_yY$$
is surjective at every point $x \in \Phi^{-1}(y)$,
where $T_xX$ and $T_yY$ denote respectively the tangent spaces of $X$ at $x$ and of $Y$
at $y$. So $y$ is a regular value of $f$ if and only if $\rank D\Phi(x)=n$ for all $x\in \Phi^{-1}(y)$.

\begin{remark}\label{RegularLevel}
	Consider the differentiable semi-algebraic map $\Phi:X \to \R^n$ where $X\subset \R^n $. Assume that $y \in Y$ is a regular value of $\Phi$ and $\Phi^{-1}(y)$ is nonempty. According to the Regular Level Set Theorem \cite[Theorem 9.9]{Loring_2010}, one has  $\dim\Phi^{-1}(y)=0$. It follows that $\Phi^{-1}(y)$ has finite points.
\end{remark}

\begin{remark}\label{Sard_parameter}
	Let $\Phi : \R^p\times X\to \R^n$ be a differentiable semi-algebraic map, where $X\subset \R^n $. Assume that $y \in \R^n$ is a regular value of $\Phi$. According to the Sard Theorem with parameter \cite[Theorem 2.4]{DHP16}, there exists a generic
	semi-algebraic set $\Sa\subset\R^p$ such that, for every $p\in \Sa$, $y$ is a regular value of the map
	$\Phi_p:X\to Y$ with $\Phi_p(x)=\Phi(p,x)$.
\end{remark}

\begin{theorem}\label{generic_1}  The set $\Ro$ of all R$_0$--tensors  is generic in $\R^{[m,n]}$.
\end{theorem}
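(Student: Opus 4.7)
The plan is to exhibit an explicit generic semi-algebraic set $\Sa\subset\R^{[m,n]}$ with $\Sa\subset\Ro$; combined with the previous proposition (which shows $\Ro$ is semi-algebraic) and the measure-zero criterion in Remark~\ref{dense}, this gives genericity of $\Ro$. The strategy rests on the pseudo-face decomposition \eqref{quasiface}: since $\Sol(\A,0)\cap K_{[n]}\subset\{0\}$ holds automatically, one has $\A\in\Ro$ if and only if $\Sol(\A,0)\cap K_\alpha=\emptyset$ for each of the finitely many $\alpha\subsetneq[n]$. I will prove each of these conditions generically via the Sard Theorem with parameter (Remark~\ref{Sard_parameter}) and intersect.

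For each $\alpha\subsetneq[n]$, let $X_\alpha:=\{x\in\R^{[n]\setminus\alpha}:x_i>0,\ \forall i\in[n]\setminus\alpha\}$, an open subset of $\R^{n-|\alpha|}$, and define the polynomial (hence differentiable, semi-algebraic) map
\[
\Phi_\alpha:\R^{[m,n]}\times X_\alpha\to\R^{n-|\alpha|},\qquad (\A,x)\mapsto\bigl((\A\tilde x^{m-1})_i\bigr)_{i\in[n]\setminus\alpha},
\]
where $\tilde x\in\R^n$ extends $x$ by zeros on the coordinates indexed by $\alpha$. I claim $0$ is a regular value of $\Phi_\alpha$. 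Indeed, at any $(\A,x)\in\Phi_\alpha^{-1}(0)$, fix any $j\in[n]\setminus\alpha$, so $x_j>0$; then for each $i\in[n]\setminus\alpha$, perturbing only the entry $a_{ij\cdots j}$ of $\A$ alters $(\A\tilde x^{m-1})_i$ at rate $x_j^{m-1}>0$ and leaves the remaining components of $\Phi_\alpha$ unchanged. Therefore the partial derivative $D_\A\Phi_\alpha(\A,x)$ already surjects onto $\R^{n-|\alpha|}$, and a fortiori $D\Phi_\alpha(\A,x)$ is surjective.

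Applying Remark~\ref{Sard_parameter}, I obtain a generic semi-algebraic set $\Sa_\alpha\subset\R^{[m,n]}$ such that $0$ is a regular value of the partial map $\Phi_{\alpha,\A}:X_\alpha\to\R^{n-|\alpha|}$ for every $\A\in\Sa_\alpha$. By Remark~\ref{RegularLevel}, $\Phi_{\alpha,\A}^{-1}(0)$ is then either empty or a finite set of points. However, homogeneity $\Phi_{\alpha,\A}(tx)=t^{m-1}\Phi_{\alpha,\A}(x)$ makes this zero set invariant under multiplication by positive scalars, so if it were nonempty it would contain an open ray in $X_\alpha$, contradicting finiteness. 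Hence $\Phi_{\alpha,\A}^{-1}(0)=\emptyset$. Now any hypothetical $y\in\Sol(\A,0)\cap K_\alpha$ would satisfy $y_i>0$ for $i\notin\alpha$, $\A y^{m-1}\geq 0$ and $\langle y,\A y^{m-1}\rangle=0$, forcing $(\A y^{m-1})_i=0$ for all $i\notin\alpha$, so that $(y_i)_{i\notin\alpha}\in\Phi_{\alpha,\A}^{-1}(0)$, a contradiction. Therefore $\Sol(\A,0)\cap K_\alpha=\emptyset$ for every $\A\in\Sa_\alpha$.

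Setting $\Sa:=\bigcap_{\alpha\subsetneq[n]}\Sa_\alpha$, a finite intersection of generic semi-algebraic sets is generic and semi-algebraic, and by the decomposition \eqref{quasiface} together with the previous step, $\Sa\subset\Ro$. Since $\R^{[m,n]}\setminus\Ro\subset\R^{[m,n]}\setminus\Sa$ has measure zero (Remark~\ref{dense}), so does the complement of the semi-algebraic set $\Ro$, proving that $\Ro$ is generic. The main obstacle is the regular-value verification and the homogeneity upgrade from \emph{finite} preimage to \emph{empty} preimage; the remainder is bookkeeping over the finite family $\{\alpha\subsetneq[n]\}$ plus the standard semi-algebraic measure-zero criterion.
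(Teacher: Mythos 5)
Your proof is correct and follows essentially the same route as the paper's: the pseudo-face decomposition \eqref{quasiface}, the parametric Sard theorem (Remark~\ref{Sard_parameter}) applied to a map whose $\A$-derivative is made surjective by perturbing the entries $a_{ij\cdots j}$, and homogeneity to pass from finiteness of the zero set to triviality of $\Sol(\A,0)$. Your only deviations are cosmetic but welcome: you eliminate the Lagrange multipliers by observing that on $K_\alpha$ complementarity forces $(\A y^{m-1})_i=0$ for all $i\notin\alpha$, which yields a square system on the open orthant $X_\alpha$, and you invoke homogeneity face-by-face to conclude emptiness of each $\Sol(\A,0)\cap K_\alpha$ rather than applying the cone property once at the end to the whole of $\Sol(\A,0)$.
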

\begin{proof} We will show that there exists a generic semi-algebraic set $\Sa\subset\R^{[m,n]}$ such that $\Sol(\A,0)=\{0\}$ for all $\A\in\Sa$. 	Indeed, let $K_{\alpha}\neq\{0\}$ be a given pseudo-face of $\R^n_+$. 
To avoid confusion, we only consider the case $\alpha=\{1,...,\ell\}$, where $\ell<n$, because other cases can be treated similarly.	
Then, if $x\in K_{\alpha}$ then $x_{\ell+1}\neq 0$.
	We consider the function
	$$\Phi_\alpha:\R^{[m,n]}\times K_{\alpha}\times \R^{\ell} \ \to \ \R^{n+\ell}$$
	which is defined by 
\begin{equation}\label{Phi_alpha}
	\Phi_\alpha(\A,x,\lambda_\alpha)=\left( \A x^{m-1}-\lambda, x_{\alpha}\right)^T,
	\end{equation}
where $x_{\alpha}=(x_{1},...,x_{\ell})$,  $\lambda_{\alpha}=(\lambda_{1},...,\lambda_{\ell})$ and $\lambda=(\lambda_{1},...,\lambda_{\ell},0,...,0)\in\R^n$. The Jacobian matrix 
	of $\Phi_\alpha$ is determined as follows
	$$D\Phi_\alpha=\left[ \begin{array}{c|c|c}
	\ D_{\A}(\A x^{m-1}-\lambda) \ & \ D_{x}(\A x^{m-1}-\lambda) \ & \ D_{\lambda_{\alpha}}(\A x^{m-1}-\lambda) \\  
	\hline
	D_{\A}(x_{\alpha}) & D_{x}(x_{\alpha})  & D_{\lambda_{\alpha}}(x_{\alpha})  \\ 
	\end{array}\right].$$
	
	We claim that the rank of $D\Phi_\alpha$ is $n+\ell$ for all $x\in K_\alpha$. Indeed,  it is easy to check that the rank of $D_{x}(x_{\alpha})$ is $\ell$. Therefore, if we prove that the rank of $D_{\A}(\A x^{m-1}-\lambda)$ is $n$ then the claim follows. Clearly, one has
	$$D_{\A}(\A x^{m-1}-\lambda)=\begin{bmatrix}
	Q_{1}	&0_{1\times n}  & \cdots &0_{1\times n}   \\ 
	0_{1\times n} 	&Q_{2}  & \cdots &0_{1\times n}   \\ 
	&  & \ddots &  \\ 
	0_{1\times n} 	&0_{1\times n}   & \cdots & Q_{n}
	\end{bmatrix},$$
	$0_{1\times n}$ is the zero $1\times n$--matrix, $Q_{i}$ is an $1\times n$--matrix. From \eqref{Axm_1} and \eqref{Phi_alpha}, for each $i\in[n]$, we conclude that $Q_{i}$ is a nonzero matrix because the $(\ell+1)$--th entry of $Q_{i}$ is given by
	$$\dfrac{\partial(\A x^{m-1}-\lambda)_i}{\partial a_{i\ell\cdots \ell}} =x_{\ell}^{m-1}\neq 0.$$
	This shows that $\rank D_{\A}(\A x^{m-1}-\lambda)=n$.
	
	Therefore,
	$0\in \R^{n+\ell}$ is a regular value of $\Phi_\alpha$. According to Remark \ref{Sard_parameter}, there exists a generic semi-algebraic set  $\Sa_{\alpha}\subset \R^{[m,n]}$,  such that if $\A\in \Sa_{\alpha}$ then $0$ is a regular value of the map
	$$\Phi_{\alpha,\A}:K_{\alpha}\times \R^{\ell} \to \R^{n+\ell}, \ \Phi_{\alpha,\A}(x,\lambda_\alpha) =\Phi_\alpha(\A,x,\lambda_\alpha).$$
	Remark \ref{RegularLevel} claims that if the set $\Omega(\alpha,\A):=\Phi^{-1}_{\alpha,\A}(0)$ 
	is nonempty then it is a zero dimensional semi-algebraic set. Hence, $\Omega(\alpha,\A)$ is a finite set. Moreover, from \eqref{Phi_alpha} and Remark \ref{KKT}, one has
	$$\Sol(\A,0)\cap K_{\alpha}=\pi(\Omega(\alpha,\A)),$$
	where $\pi$ is the projection $\R^{n+\ell} \to \R^n$ which is defined by $\pi(x,\lambda_{\alpha}) = x$. Thus, the cardinality of $\Sol(\A,0)\cap K_{\alpha}$ is finite.
	
	If $K_{\alpha}=\{0\}$, i.e. $\alpha=[n]$, then $\Sol(\A,0)\cap K_{\alpha}=\{0\}$. By the finite decomposition in \eqref{quasiface}, $\Sol(\A,0)$  is a finite set. 

By setting $\Sa:=\cap_{\alpha\subset [n]}\Sa_{\alpha}$, for any $\A$ in $\Sa$, the cardinality of $\Sol(\A,0)$ is finite. Since $\Sol(\A,0)$ is a cone, one has $\Sol(\A,0)=\{0\}$. This follows that $\Sa\subset \Ro$, consequently, $\Ro$ is generic in $\R^{[m,n]}$. The proof is completed. \qed
\end{proof}

\begin{remark} Theorem 6 in \cite{OettliYen95} asserts that the set of all R$_0$--matrices is dense in $\R^{n\times n}$. This is a special case of Theorem \ref{generic_1} when $m=2$. 
\end{remark}

\subsection{The dimension of $C(\Ro)$}
From Remark \ref{dense} and Theorem \ref{generic_1}, the complement $C(\Ro)$ is thin in the set of real $m$-th order $n$-dimensional tensors. A natural question is: \textit{How  $C(\Ro)$ is thin in $\R^{[m,n]}$}? The dimension of $C(\Ro)$ tell us about the thinness of this set. The following theorem gives a rough lower estimate for $\dim C(\Ro)$.

\begin{theorem}
	The dimension of the semi-algebraic set $C(\Ro)$ satisfies the following inequalities
	$$(n-1)^{m}\leq\dim C(\Ro)\leq n^m-1.$$
\end{theorem}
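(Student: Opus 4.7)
The plan is to split the claim into two independent bounds. For the upper bound $\dim C(\Ro)\leq n^m-1$, I would invoke Theorem \ref{generic_1} together with Remark \ref{dense}: the genericity of the semi-algebraic set $\Ro$ in the $n^m$-dimensional ambient space $\R^{[m,n]}$ means precisely that its complement $C(\Ro)$ has dimension strictly less than $n^m$, and since the dimension of a semi-algebraic set is a non-negative integer, this forces $\dim C(\Ro)\leq n^m-1$.

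For the lower bound $\dim C(\Ro)\geq(n-1)^m$, the strategy is to exhibit an explicit linear (hence semi-algebraic) subspace $L\subseteq C(\Ro)$ of that dimension, after which $\dim C(\Ro)\geq\dim L$ follows from the definition of dimension. Concretely, I would take
$$L:=\bigl\{(a_{i_1\cdots i_m})\in\R^{[m,n]}:\ a_{i_1\cdots i_m}=0\ \text{whenever}\ n\in\{i_1,\ldots,i_m\}\bigr\}.$$
The free entries of an element of $L$ are indexed by $(i_1,\ldots,i_m)\in\{1,\ldots,n-1\}^m$, so $L$ is a linear subspace of dimension $(n-1)^m$.

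To verify $L\subseteq C(\Ro)$, I would evaluate any $\A\in L$ at the $n$-th standard basis vector $e_n=(0,\ldots,0,1)^T$. Formula \eqref{Axm_1} gives $(\A e_n^{m-1})_i=a_{in\cdots n}$, which vanishes for every $i$ because the index $n$ already appears in the tuple $(i,n,\ldots,n)$. Hence $\A e_n^{m-1}=0\geq 0$ and $\A e_n^m=0$, so $e_n\in\Sol(\A,0)\setminus\{0\}$, which means $\A\notin\Ro$.

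I do not expect any real obstacle. The only conceptual choice is the subspace $L$, which is essentially the image of the canonical embedding $\R^{[m,n-1]}\hookrightarrow\R^{[m,n]}$ that zero-pads whenever an index takes the value $n$; this embedding is tailor-made so that $e_n$ automatically becomes a nonzero solution of $\TCP(\A,0)$ for every $\A$ in the image.
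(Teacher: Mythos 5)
Your proof is correct and essentially identical to the paper's: the upper bound via the genericity of $\Ro$ (Theorem \ref{generic_1} and Remark \ref{dense}), and the lower bound via an $(n-1)^m$-dimensional coordinate subspace of tensors whose entries vanish whenever a distinguished index appears, which forces a nonzero ray of solutions of $\TCP(\A,0)$. The paper uses the index set $\alpha=\{2,\dots,n\}$ (so $e_1$ is the forced solution) where you use $\{1,\dots,n-1\}$ and $e_n$; this is only a relabeling.
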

\begin{proof} The second inequality immediately follows from Theorem \ref{generic_1} and Remark \ref{dense}. To prove the first inequality, let $\alpha\subset [n]$ be given with $\alpha\neq[n]$, 	
	we consider the set
	$$\Sa_{\alpha}=\left\lbrace \A=(a_{i_1i_2\cdots i_m})\in\R^{[m,n]}:a_{i_1i_2\cdots i_m}=0, \  \forall i_j\in[n]\setminus\alpha \right\rbrace.$$	
	It follows that $\Sa_{\alpha}$ is a subspace of $\R^{[m,n]}$ with the dimension 
	$|\alpha|^{m}$. Hence, $\Sa_{\alpha}$ is semi-algebraic.
	Let us denote by 
	$\bar K_{\alpha}$ the face
	$$\bar K_{\alpha}=\left\lbrace x\in \R_+^n:x_i=0, \forall i\in\alpha; \; x_i \geq 0, \forall i\in [n]\setminus\alpha\right\rbrace.$$
	A trivial verification shows that $\bar K_{\alpha}\subset\Sol(\A,0)$ for all $\A\in \Sa_{\alpha}$. We conclude that the subspace $\Sa_{\alpha}$ is a subset of $C(\Ro)$. Thus, one has
	$$|\alpha|^{m}=\dim\Sa_{\alpha}\leq\dim C(\Ro).$$
	Take $\alpha=\{2,...,n\}$, one has $|\alpha|=n-1$. The first inequality is obtained. \qed
\end{proof}

\section{Lower semicontinuity of solution maps}
We will prove that the solution map of tensor complementarity problems is finite-valued on a generic semi-algebraic set in the parametric space. Consequently, a necessary condition for the lower semicontinuity of the solution map is given.

Now we recall some notions in set-valued analysis. The set-valued map $\Psi:\R^m\rightrightarrows \R^n$ is \textit{finite-valued} on $S\subset \R^m$ if the cardinality of the image $\Psi(x)$ is finite, namely $|\Psi(x)|<+\infty$, for all $x\in S$. The map $\Psi$ is  \textit{upper semicontinuous} at $x\in \R^m$ iff for any open set $V\subset Y$ such that $\Psi(x)\subset V$ there exists a neighborhood $U$ of $x$ such that $\Psi(x')\subset V$ for all $x'\in U$. If $\Psi$ is upper semicontinuous at every $x\in \R^m$ then $\Psi$ is said that to be upper semicontinuous on $\R^m$. The map $\Psi$ is \textit{lower semicontinuous} at $\bar x$ (see \cite[Proposition 2.1.17]{FaPa03}) if $\Psi(\bar x)=\liminf_{x\to \bar x}\Psi(x)$, where
$$\liminf_{x\to \bar x}\Psi(x)=\left\lbrace u\in\R^n\;: \;\forall x^k\to \bar x, \; \exists u^k\to u \; \text{ with } \; u^k\in \Psi(x^k)\right\rbrace.$$
If $\Psi$ is lower semicontinuous at every $x\in X$ then $\Psi$ is said that to be lower semicontinuous on $X$.

\begin{remark}\label{Coste413} The number of connected components of $\Sol(\A,a)$ does not excess 
	$	\chi=d(2d-1)^{5n}$,	where $d=\max\left\{2, m-1\right\}.$
	Indeed, let $\Omega$ be the set of all $(x,\lambda)\in \R^n\times\R^n$ such that the following conditions are satisfied
	$$
	\left\lbrace \begin{array}{r}
	\A x^{m-1}+q-\lambda=0,\\ 
	\langle \lambda,x \rangle=0,\\ \lambda\geq 0,x \geq 0.
	\end{array}\right.
	$$Clearly, $\Omega$ is a semi-algebraic set determined by $3n+1$ polynomial equations and inequalities in $2n$ variables, whose degrees do not exceed the number $d$. According to \cite[Proposition 4.13]{Coste02}, the number of connected components of $\Omega$ does not excess the  number $\chi$. By the definition of $\Omega$, one has $\Sol(\A,a)=\pi(\Omega),$
	where $\pi$ is the projection 
	$$\R^{n+n} \to \R^n, \ \pi(x,\lambda) = x.$$ 
By the continuity of $\pi$, the number of connected components of $\Sol(\A,a)$ also does not excess $\chi$. \end{remark}

In the following proposition, we consider the finite-valuedness of two solution maps of the tensor complementarity problems $\Sol$ given by  \eqref{Sol} and $\Sol_{\A}$ defined by
\begin{equation}\label{Sol_A}
\Sol_{\A}:\R^{n}\rightrightarrows \R^n, \ a\mapsto\Sol_{\A}(a)=\Sol(\A,a),
\end{equation}
where $\A$ is given.

\begin{proposition}\label{generic_3}
 There exists a generic semi-algebraic set $\Sa\subset\R^{[m,n]}\times\R^{n}$  such that the map $\Sol$ is finite-valued on $\Sa$.
\end{proposition}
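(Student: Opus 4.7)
The plan is to mimic the argument of Theorem \ref{generic_1}, but this time treating the pair $(\A,a)$ as the parameter instead of only $\A$. The advantage is that the derivative with respect to the new parameter $a$ of the KKT residual equation is automatically the identity, so the transversality step becomes essentially immediate and there is no need to extract a nonzero partial of $\A x^{m-1}$ by a clever entry as was done before.

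Concretely, I would fix an index set $\alpha\subsetneq[n]$ and, writing $\alpha=\{1,\dots,\ell\}$ without loss of generality, define the semi-algebraic differentiable map
\begin{equation*}
\Phi_\alpha:\R^{[m,n]}\times\R^n\times K_\alpha\times\R^{\ell}\;\to\;\R^{n+\ell},\quad
\Phi_\alpha(\A,a,x,\lambda_\alpha)=(\A x^{m-1}+a-\lambda,\;x_\alpha)^T,
\end{equation*}
with $\lambda=(\lambda_1,\dots,\lambda_\ell,0,\dots,0)\in\R^n$. The Jacobian $D\Phi_\alpha$ contains the block $D_a(\A x^{m-1}+a-\lambda)=I_n$ on the first $n$ equation rows, and the block $D_x(x_\alpha)=[I_\ell\,|\,0]$ on the last $\ell$ rows; these two blocks span independent coordinate directions, so $\rank D\Phi_\alpha=n+\ell$ at every point. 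Hence $0\in\R^{n+\ell}$ is a regular value of $\Phi_\alpha$.

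Now apply Remark \ref{Sard_parameter} (Sard with parameter) with parameter space $\R^{[m,n]}\times\R^n$: there is a generic semi-algebraic set $\Sa_\alpha\subset\R^{[m,n]}\times\R^n$ such that, for every $(\A,a)\in\Sa_\alpha$, $0$ is a regular value of the restricted map
\begin{equation*}
\Phi_{\alpha,(\A,a)}:K_\alpha\times\R^\ell\to\R^{n+\ell},\quad
\Phi_{\alpha,(\A,a)}(x,\lambda_\alpha)=\Phi_\alpha(\A,a,x,\lambda_\alpha).
\end{equation*}
By Remark \ref{RegularLevel}, the preimage $\Omega(\alpha,\A,a):=\Phi_{\alpha,(\A,a)}^{-1}(0)$ is either empty or a zero-dimensional semi-algebraic set, hence finite. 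By Remark \ref{KKT} together with the complementarity structure (on $K_\alpha$ one has $x_i>0$ for $i\notin\alpha$, forcing $\lambda_i=0$ there), the projection $\pi(x,\lambda_\alpha)=x$ satisfies $\Sol(\A,a)\cap K_\alpha=\pi(\Omega(\alpha,\A,a))$, so this intersection is finite.

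Finally, the remaining pseudo-face $\alpha=[n]$ contributes only $K_{[n]}=\{0\}$, which gives at most one point. Setting $\Sa:=\bigcap_{\alpha\subset[n]}\Sa_\alpha$, a finite intersection of generic semi-algebraic sets is generic semi-algebraic, and the decomposition \eqref{quasiface} shows that $\Sol(\A,a)$ is a finite union of finite sets for every $(\A,a)\in\Sa$, proving finite-valuedness of $\Sol$ on $\Sa$. The only mildly delicate point is the bookkeeping ensuring that $K_\alpha\times\R^\ell$ is a genuine semi-algebraic manifold of the correct dimension (it is, being relatively open in the affine space $\{x_\alpha=0\}\times\R^\ell$), so the tangent space identifications used in the regular-value definition are valid; this is routine.
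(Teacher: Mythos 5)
Your proof is correct and follows essentially the same route as the paper: the pseudo-face decomposition, the parametric Sard theorem of Remark \ref{Sard_parameter}, the Regular Level Set Theorem, and the projection onto the $x$-coordinates. The only difference is that you certify surjectivity of $D\Phi_\alpha$ via the block $D_a(\A x^{m-1}+a-\lambda)=I_n$ rather than via $D_{\A}(\A x^{m-1}-\lambda)$ as the paper does; this is a harmless (indeed slightly cleaner) variation, since it avoids re-invoking the nonvanishing-entry argument of Theorem \ref{generic_1}.
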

\begin{proof} To prove the assertion, we apply the argument in the proof of Theorem~\ref{generic_1} again, the only difference
	being in the analysis of the function
	$$\Phi_\alpha:\R^{[m,n]}\times\R^n\times K_{\alpha}\times \R^{\ell} \ \to \ \R^{n+\ell}$$
	which defined by
	$$\Phi_\alpha(\A,a,x,\lambda_\alpha)=\left( \A x^{m-1}+a-\lambda, x_{\alpha}\right) ^T.$$
	Note that, since $D_{\A} \Phi_\alpha$ has rank $n$, the rank of $D\Phi_\alpha$ is $n+\ell$ for $x\in K_\alpha\neq\{0\}$, and the proof is complete. \qed
\end{proof}

\begin{remark}\label{generic_5} Let $\A$ be given. There exists a generic semi-algebraic set $\Sa_{\A}\subset\R^{n}$ such that the map $\Sol_{\A}$  is finite-valued on $\Sa_{\A}$.
This property is implied from \cite[Theorem 3.2]{LLP2018} with the note that $\R^n_+$ is a semi-algebraic set satisfying the linearly independent constraint qualification.
\end{remark}

\begin{theorem}\label{generic_4} If the solution map $\Sol$ is lower semicontinuous at $(\A,a)$, then $\Sol(\A,a)$ has finite elements. Hence, if $\dim\Sol(\A,a)\geq 1$, then $\Sol$ is not lower semicontinuous at $(\A,a)$.
\end{theorem}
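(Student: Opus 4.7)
The plan is to combine three ingredients already available in the paper: the uniform bound $\chi=d(2d-1)^{5n}$ on the number of connected components of any solution set (Remark \ref{Coste413}), the density of the generic set on which $\Sol$ is finite-valued (Proposition \ref{generic_3}), and the $\liminf$ characterization of lower semicontinuity. The first observation is that whenever $\Sol(\B,b)$ is finite its cardinality is automatically at most $\chi$, since each point of a finite subset of $\R^n$ is its own connected component. Consequently, if $\Sa\subset\R^{[m,n]}\times\R^n$ denotes the generic semi-algebraic set from Proposition \ref{generic_3}, then $|\Sol(\B,b)|\leq\chi$ uniformly for all $(\B,b)\in\Sa$.

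Next I would exploit the density of $\Sa$ (genericity implies density, by Remark \ref{dense}) to pick a sequence $(\A^k,a^k)\in\Sa$ with $(\A^k,a^k)\to(\A,a)$. To obtain the first assertion I argue by contradiction: suppose $\Sol(\A,a)$ contains $\chi+1$ pairwise distinct points $u_1,\dots,u_{\chi+1}$. The hypothesis that $\Sol$ is lower semicontinuous at $(\A,a)$ gives, for each $i$, a sequence $u_i^k\in\Sol(\A^k,a^k)$ with $u_i^k\to u_i$. Since the limits $u_1,\dots,u_{\chi+1}$ are pairwise distinct, for all sufficiently large $k$ the vectors $u_1^k,\dots,u_{\chi+1}^k$ are pairwise distinct as well, forcing $|\Sol(\A^k,a^k)|\geq\chi+1$ and contradicting the uniform bound. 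Hence $|\Sol(\A,a)|\leq\chi<+\infty$.

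For the second assertion I would remark that $\Sol(\A,a)$ is a semi-algebraic subset of $\R^n$ (it is cut out by finitely many polynomial equalities and inequalities, so the Tarski--Seidenberg Theorem applies). By the decomposition recalled in Section 2, a semi-algebraic set of dimension at least one contains a piece semi-algebraically diffeomorphic to $(0,1)^d$ with $d\geq 1$, and is therefore infinite. Thus $\dim\Sol(\A,a)\geq 1$ rules out the finiteness obtained in the first part, so $\Sol$ cannot be lower semicontinuous at $(\A,a)$.

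The only delicate point is the ``distinct limits force eventually distinct tails'' step together with the correct invocation of the $\liminf$ definition of lower semicontinuity: one must produce the approximating sequence $(\A^k,a^k)\in\Sa$ \emph{first}, and then apply the lower semicontinuity coordinatewise to each of the $\chi+1$ chosen points of $\Sol(\A,a)$ along this single fixed sequence. Once this ordering is respected, the contradiction is immediate and the rest is an appeal to results already established in the excerpt.
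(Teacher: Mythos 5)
Your proposal is correct and follows essentially the same route as the paper: both use the density of the generic set from Proposition \ref{generic_3} to choose a sequence $(\A^k,a^k)\to(\A,a)$ on which $\Sol$ is finite-valued, invoke the uniform bound $\chi$ from Remark \ref{Coste413}, and pass the bound to the limit via the $\liminf$ characterization of lower semicontinuity. Your contradiction argument with $\chi+1$ distinct points merely spells out the ``it follows that $|\Sol(\A,a)|\leq\chi$'' step that the paper leaves implicit, and your justification of the second assertion (a semi-algebraic set of dimension at least one is infinite) is the intended reading of ``the second assertion follows the first one.''
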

\begin{proof} According to Proposition \ref{generic_3}, there exists a  generic set $\Sa$ in $\R^{[m,n]}\times\R^n$ such that $\Sol$ is finite-valued on $\Sa$. By the density of $\Sa$, there exists a sequence
	$\{(\A^k,a^k)\}\subset \Sa$ such that $(\A^k,a^k)\to (\A,a)$. From Remark \ref{Coste413}, $\Sol(\A^k,a^k)$ has finitely points and $|\Sol(\A^k,a^k)|\leq\chi$. Since $\Sol$  is lower semicontinuous, one has
	$$\Sol(\A,a)=\liminf_{k\to+\infty}\Sol(\A^k,a^k).$$
	It follows that	$|\Sol(\A,a)|\leq\chi$. The first assertion is proved. The second assertion follows the first one.
\qed
\end{proof}

\begin{example}\label{example_1} 
	Consider the tensor complementarity  problem $\TCP(\A,a)$ where $\A\in\R^{[3,2]}$ given by  $a_{111}=a_{122}=-1$, $a_{211}=a_{222}=-1$ and all other $a_{i_1i_2i_3}=0$. Obviously, one has
	$$\A x^{m-1}+q=\begin{bmatrix}
	-x_1^2-x_2^2\\
	-x_1^2-x_2^2
	\end{bmatrix}+\begin{bmatrix}
	a_1\\
	a_2
	\end{bmatrix},$$
	where the parameters $a_1,a_2\in\R$. 	From Remark \ref{KKT}, $x\in\Sol(\A,a)$ if and only if there exists $\lambda\in\R^2$ such that
$$\begin{bmatrix}
-x_1^2-x_2^2\\
-x_1^2-x_2^2
\end{bmatrix}+\begin{bmatrix}
a_1\\
a_2
\end{bmatrix}-\begin{bmatrix}
\lambda_1\\
\lambda_2
\end{bmatrix}=
\begin{bmatrix}
0\\
0
\end{bmatrix}.$$	
An easy computation shows that
	$$\Sol(\A,a)=\left\{\begin{array}{cl}
	\left\lbrace(0,0),(0,\sqrt{a_2})\right\rbrace  & \text{ if } 0\leq a_2<a_1, \\
	\left\lbrace(0,0),(\sqrt{a_1},0)\right\rbrace & \text{ if } 0\leq a_1< a_2, \\
	\left\lbrace(0,0)\right\rbrace \cup S_{a_1}& \text{ if } 0\leq a_1=a_2, \\
	\emptyset & \text{ if otherwise},\\
	\end{array}\right.$$
	where $$S_{a_1}=\{(x_1,x_2):x_1^2+x_2^2=a_1,x_1\geq 0, x_2\geq 0\}.$$ 
	Clearly, $\Sol_{\A}(a)$ is finite-valued for all $a\in\Sa$, where
	$$\Sa=\R^2\setminus \{a\in\R^2: 0 < a_1=a_2\}.$$
The set $\Sa$ is generic semi-algebraic set in $\R^2$. Moreover, since $\dim S_{a_1}=1$ with $a_1>0$, according to Theorem \ref{generic_4}, the map $\Sol$ is not lower semicontinuous at $(A,a)$ where $a\in\R^2$ with $0 < a_1=a_2$.
\end{example}

\section{Upper semicontinuity of the solution map}
This section establishes a closed relationship between the R$_0$ property and the upper semicontinuity of the solution map of tensor complementarity problems. Furthermore, two results on the single-valued continuity of the solution map $\Sol_{\A}$ are obtained.

\subsection{Necessary and sufficient conditions}
\begin{proposition}\label{usc_1}  If $\A$ is an R$_0$--tensor, then the map $\Sol$	is upper semicontinuous at $(A,a)$, for every $a\in\R^n$ satisfying $\Sol(\A,a)\neq\emptyset$.
\end{proposition}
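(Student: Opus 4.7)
The plan is to prove upper semicontinuity by combining two ingredients already established in the paper: local boundedness of $\Sol$ at $(\A,a)$ when $\A$ is R$_0$ (from Theorem \ref{bounded1} and its corollary), and the closedness of the graph of $\Sol$, which is a routine consequence of the continuity of the polynomial map $(\A,a,x)\mapsto \A x^{m-1}+a$.

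I would argue by contradiction. Assume $\Sol$ is not upper semicontinuous at $(\A,a)$. Then there is an open set $V\subset\R^n$ with $\Sol(\A,a)\subset V$ and sequences $(\A^k,a^k)\to(\A,a)$ and $x^k\in\Sol(\A^k,a^k)$ such that $x^k\notin V$ for every $k$. First I would invoke Theorem \ref{bounded1}: since $\A\in\Ro$, there exist $\varepsilon,\delta>0$ such that the union $S(\varepsilon,\delta)$ of solution sets over a neighborhood of $(\A,a)$ is bounded. For $k$ large enough, $(\A^k,a^k)$ lies in this neighborhood, so $\{x^k\}$ is bounded. Passing to a subsequence, $x^k\to\bar x$ for some $\bar x\in\R^n$. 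Because $V$ is open and $x^k\notin V$, the limit satisfies $\bar x\notin V$; in particular $\bar x\notin\Sol(\A,a)$.

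Next I would verify that $\bar x$ in fact lies in $\Sol(\A,a)$, contradicting the previous line. From $x^k\in\Sol(\A^k,a^k)$ we have
\[
x^k\geq 0,\qquad \A^k(x^k)^{m-1}+a^k\geq 0,\qquad \langle x^k,\A^k(x^k)^{m-1}+a^k\rangle=0.
\]
Since the maps $(\A',x')\mapsto \A'(x')^{m-1}$ and $(\A',a',x')\mapsto \langle x',\A'(x')^{m-1}+a'\rangle$ are polynomial (hence continuous), passing to the limit gives $\bar x\geq 0$, $\A\bar x^{m-1}+a\geq 0$, and $\langle \bar x,\A\bar x^{m-1}+a\rangle=0$, that is, $\bar x\in\Sol(\A,a)\subset V$, a contradiction.

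There is no serious obstacle: the local boundedness step is handed to us by Theorem \ref{bounded1}, and the graph-closure step is a direct continuity argument. The only detail worth mentioning is that upper semicontinuity at $(\A,a)$ is the correct notion to derive, since the hypothesis $\Sol(\A,a)\neq\emptyset$ ensures that the open cover $V$ in the definition is meaningful (and in the locally bounded compact-valued setting, sequential closedness of the graph together with local boundedness is well known to yield upper semicontinuity).
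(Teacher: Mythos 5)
Your proof is correct and follows essentially the same route as the paper's: argue by contradiction, use Theorem \ref{bounded1} to bound the sequence $\{x^k\}$, extract a convergent subsequence, and pass to the limit in the defining inequalities to land in $\Sol(\A,a)\subset V$ while $\bar x\notin V$. The only difference is that you spell out the continuity (closed-graph) step that the paper compresses into ``it is easy to check that $\bar x$ solves $\TCP(\A,a)$.''
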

\begin{proof} Suppose that $\A$ is an R$_0$--tensor but there is $a\in\R^n$ such that $\Sol(\A,a)\neq \emptyset$ and $\Sol$ is not upper semicontinuous at $(\A,a)$. There exists a nonempty open set $V$ containing $\Sol(\A,a)$, a sequence $\{(\A^k,a^k)\}$, and a sequence $\{x^k\}$ satisfying $(\A^k,a^k)\to (\A,a)$ and
	\begin{equation}\label{V_open0}
	x^k\in\Sol(\A^k,a^k)\setminus V.
	\end{equation}
According to Theorem \ref{bounded1}, there exists $k_0$ such that $\{x^k, k\geq k_0\}$ is a bounded subsequence. So, the sequence $\{x^k\}$ is bounded. Without loss of generality we can assume that  $x^k\to \bar x$. It is easy to check that  $\bar x$ solves $\TCP(\A,a)$. It follows that $\bar x\in V$. By the openness of $V$ and \eqref{V_open0}, one has $\bar x\notin V$. We obtain a contradiction. Therefore, the map $\Sol$ is upper semicontinuous at $(\A,a)$. \qed \end{proof}

\begin{corollary}\label{SolA_usc} If $\A$ is an R$_0$--tensor, then the map $\Sol_{\A}$	 is upper semicontinuous at $a$, for every $a\in\R^n$ satisfying $\Sol(\A,a)\neq\emptyset$.
\end{corollary}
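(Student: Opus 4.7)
The plan is to derive this directly from Proposition~\ref{usc_1} by viewing $\Sol_{\A}$ as the restriction of the joint solution map $\Sol$ to the slice $\{\A\}\times \R^n$. Since upper semicontinuity is a local property, and the defining inclusion $\Sol(\B,b)\subset V$ is preserved under further restriction of the parameter, the single-parameter statement should follow with essentially no additional work.

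More concretely, I would fix $a\in\R^n$ with $\Sol(\A,a)\neq\emptyset$ and let $V\subset \R^n$ be any open set containing $\Sol_{\A}(a)=\Sol(\A,a)$. Since $\A$ is R$_0$, Proposition~\ref{usc_1} tells us that $\Sol$ is upper semicontinuous at $(\A,a)$, so there exists an open neighborhood $W\subset \R^{[m,n]}\times\R^n$ of $(\A,a)$ such that $\Sol(\B,b)\subset V$ for every $(\B,b)\in W$. Define $U:=\{b\in\R^n:(\A,b)\in W\}$; this is an open neighborhood of $a$ in $\R^n$ because $W$ is open and the map $b\mapsto (\A,b)$ is continuous. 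For every $b\in U$ one has $(\A,b)\in W$, hence $\Sol_{\A}(b)=\Sol(\A,b)\subset V$, which is exactly the upper semicontinuity of $\Sol_{\A}$ at $a$.

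I do not anticipate a genuine obstacle: the argument is a one-line reduction using the fact that the slice of an open set along a continuous inclusion is open. The only point that warrants a brief justification is why we may apply Proposition~\ref{usc_1} at $(\A,a)$ in the first place, namely that its hypothesis $\Sol(\A,a)\neq\emptyset$ coincides with the hypothesis of the corollary, so no extra case analysis (for instance, when $\Sol_{\A}(a)$ is empty) is required.
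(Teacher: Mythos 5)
Your proof is correct, and at the top level it is the same reduction the paper makes: invoke Proposition~\ref{usc_1} to get upper semicontinuity of the joint map $\Sol$ at $(\A,a)$, then pass to the parameter $a$ alone. The one place where you genuinely diverge is in how the neighborhood of $a$ is produced. You take the slice $U=\{b\in\R^n:(\A,b)\in W\}$, i.e.\ the preimage of $W$ under the continuous inclusion $b\mapsto(\A,b)$, which is open and has the property that every $b\in U$ satisfies $(\A,b)\in W$, hence $\Sol_{\A}(b)\subset V$. The paper instead pushes the neighborhood $U$ of $(\A,a)$ forward under the projection $\varphi(\B,b)=b$ and invokes the Open Mapping Theorem to conclude $\varphi(U)$ is an open neighborhood of $a$. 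Your route is both more elementary (no open mapping theorem needed) and logically tighter: for $b\in\varphi(U)$ one only knows that \emph{some} $(\B,b)$ lies in $U$, not that $(\A,b)$ does, so the paper's argument as literally written requires first shrinking $U$ to a product neighborhood of $(\A,a)$; your slice construction sidesteps that issue entirely.
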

\begin{proof} Suppose that $\A$ is an R$_0$--tensor and  $\Sol(\A,a)$ is nonempty. 	
	From Proposition \ref{usc_1}, the map $\Sol$ is upper semicontinuous at $(A,a)$. For any open neighborhood $V$ of $\Sol(A,a)$, there exists an open neighborhood $U$ of $(A,a)$ such that $(\B,b)\in U$ then $\Sol(\B,b)\subset V$. Consider the map $$\varphi:\R^{[m,n]}\times\R^n\rightrightarrows \R^n, \ (\B,b)\mapsto b.$$
Clearly, $\varphi$ is surjective, continuous and linear. According to the Theorem Open Mapping \cite[Theorem 2.11]{Rudin91}, $\varphi(U)$ is an open neighborhood of $a$. By definition, $\Sol_{\A}$	 is upper semicontinuous at $a$. \qed	
\end{proof}

\begin{example}
	Consider the tensor complementarity  problem $\TCP(\A,a)$ given in Example \ref{example_1}. 
	One has $\Sol_{\A}(0)=\{0\}$, so $\A$ is an R$_0$--tensor. From Corollary \ref{SolA_usc}, the solution map $\Sol_{\A}$ is upper semicontinuous on $\R^2_+$.
\end{example}

\begin{remark} The inverse assertion in Proposition \ref{SolA_usc} is not true.  Indeed, choose $\A=\Oo\in\R^{[3,2]}$, one has
	\begin{equation*}\label{SolO}
	\Sol_{\Oo}(a_1,a_2)=\left\{\begin{array}{ccc}
	\R^2_+ & \text{ if } &a_1=0,a_2=0, \\
	\R_+\times\{0\}&  \text{ if } & a_1=0,a_2>0,  \\
		\{0\}\times\R_+ &  \text{ if } & a_1>0,a_2=0, \\
	\{(0,0)\} &  \text{ if } & a_1>0,a_2>0,\\	
	\emptyset & \text{ if } & \text{ otherwise.}
	\end{array}\right.
	\end{equation*}
It is easy to check that $\Sol_{\Oo}$ is upper semicontinuous on $\dom\Sol_{\Oo}=\R^2_+$, but $\Oo$ has not R$_0$ property.
\end{remark}

\begin{proposition}\label{usc_2} Assume that $\Sol(\A,a)$ is nonempty and bounded. If  the map $\Sol$ is upper semicontinuous at $(\A,a)$, then  $\A$ is a R$_0$--tensor.
\end{proposition}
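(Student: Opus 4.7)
My plan is to argue by contradiction: assuming $\A \notin \Ro$, I would construct a sequence of perturbations $\A^k \to \A$ together with solutions $x^k \in \Sol(\A^k, a)$ satisfying $\|x^k\| \to \infty$. Since $\Sol(\A,a)$ is bounded, I can enclose it in a bounded open set $V$; upper semicontinuity of $\Sol$ at $(\A, a)$ would then force $\Sol(\A^k, a) \subset V$ for all sufficiently large $k$, so $x^k \in V$, contradicting the divergence of $\|x^k\|$.

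For the construction, fix any $\bar x \in \Sol(\A, 0) \setminus \{0\}$ and pick an index $j$ with $\bar x_j > 0$. Define $\B^k \in \R^{[m,n]}$ whose only nonzero entries are
$$ b^k_{i, j, j, \ldots, j} \;=\; -\frac{a_i}{k^{m-1}\bar x_j^{\,m-1}}, \qquad i \in [n]. $$
Then $\|\B^k\| = \|a\|/(k^{m-1}\bar x_j^{\,m-1}) \to 0$ and, since $(\B^k y^{m-1})_i = b^k_{i, j, \ldots, j}\, y_j^{m-1}$, one has $(\B^k \bar x^{m-1})_i = -a_i / k^{m-1}$ for every $i$. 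Setting $x^k := k \bar x$ gives
$$ (\A+\B^k)(k\bar x)^{m-1} + a \;=\; k^{m-1}\A\bar x^{m-1} + k^{m-1}(\B^k\bar x^{m-1}) + a \;=\; k^{m-1}\A\bar x^{m-1}, $$
which is nonnegative because $\bar x \in \Sol(\A, 0)$; the complementarity identity $\langle x^k, (\A+\B^k)(x^k)^{m-1} + a\rangle = k^m \A\bar x^m = 0$ follows from the same fact. Hence $x^k \in \Sol(\A+\B^k, a)$ and $\|x^k\| = k\|\bar x\| \to \infty$, as required for the contradiction.

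The only delicate step is designing a perturbation that is small in norm yet absorbs the full vector $a$ after composition with $\bar x^{m-1}$. The $(m-1)$--homogeneity of the map $y \mapsto \A y^{m-1}$ is what makes this possible: scaling $\bar x$ by $k$ amplifies the principal term by a factor $k^{m-1}$, leaving enough room for a perturbation of order $1/k^{m-1}$ that vanishes in the limit. The sparse ``slice'' tensor above works because $(\B y^{m-1})_i$ depends only on the entries $b_{i, \cdot, \ldots, \cdot}$, which decouples the $n$ scalar conditions $(\B^k \bar x^{m-1})_i = -a_i/k^{m-1}$ and lets one solve them independently. After this choice is made, the three defining conditions of $\TCP(\A+\B^k, a)$ evaluated at $k \bar x$ collapse exactly to $\bar x \geq 0$, $\A \bar x^{m-1} \geq 0$, and $\A \bar x^m = 0$, i.e., to $\bar x \in \Sol(\A, 0)$, and no further verification is needed.
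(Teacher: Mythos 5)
Your proof is correct and follows essentially the same route as the paper: perturb $\A$ by a sparse ``slice'' tensor that sends $\bar x^{m-1}$ to a small multiple of $-a$, scale the nonzero homogeneous solution to produce unbounded solutions of the perturbed problems, and contradict upper semicontinuity via a bounded open neighborhood of $\Sol(\A,a)$. The only cosmetic differences are that you verify the complementarity conditions directly rather than through the Lagrange-multiplier formulation of Remark~\ref{KKT}, and your perturbation scale $k^{-(m-1)}$ is the homogeneity-correct counterpart of the paper's factor $t$.
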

\begin{proof}
	Suppose that $\Sol(\A,0)\neq\{0\}$ and $y\in\Sol(\A,0)$ with $y\neq 0$. According to Remark \ref{KKT}, there exists $\lambda\in\R^n$ such that 
	\begin{equation}\label{KKT_H0}
	\left\lbrace \begin{array}{r}
\A y^{m-1}-\lambda=0,\\ 
	\langle \lambda, y \rangle=0,\\ \lambda\geq 0, \; y \geq 0.
	\end{array}\right. 
	\end{equation}
	For each $t\in(0,1)$, we take $y_t=t^{-1}y$ and $ \lambda_t=t^{-(m-1)}\lambda$. We will show that for every $t$ there exists $\A_t\in\R^{[m,n]}$ with $\A_t\to \A$ when $t\to 0$ and the following system is satisfied
	\begin{equation}\label{KKT_Ht}
	\left\lbrace \begin{array}{r}
	\A_t (y_t)^{m-1}+{q}-\lambda_t=0,\\ 
	\langle \lambda_t,y_t \rangle=0,\\ \lambda_t\geq 0, \; y_t \geq 0.
	\end{array}\right. 
	\end{equation}
	
	Since $y=(y_1,...,y_n)\neq 0$, there exists $y_{\ell}\neq 0$, so one has $y^{m-1}_{\ell}\neq 0$. Taking $\Q\in\R^{[m,n]}$ such that $$\Q x^{m-1}=\left( q_1x_i^{m-1},...,q_nx_i^{m-1}\right),$$ 
	where $q_j=-a_j/y_{\ell}^{m-1}$ for $j=1,...,n$. It is clear that $\Q y^{m-1}+ a=0$. We take 
	$\A_t= \A+t\Q$ and prove that the system \eqref{KKT_Ht} is true. Indeed, the last two  inequalities in \eqref{KKT_Ht} are obvious. Consider the left-hand side of the first equation in~\eqref{KKT_Ht}, from \eqref{KKT_H0} we have
	$$ \begin{array}{cl}
	\A_t(y_t)^{m-1}+ a-\lambda_t&=(\A_t=\A+t\Q)(t^{-1}y)^{m-1} +a- t^{-(m-1)}\lambda\\ 
	&=t^{-(m-1)}(\A y^{m-1}-\lambda)+(\Q y^{m-1}+a) \\ 
	&= 0.
	\end{array} $$
	The second equation  in~\eqref{KKT_Ht} is obtained by
	$$\langle \lambda_t,y_t \rangle=\langle t^{-(m-1)}\lambda,t^{-1}y \rangle=t^{-m}\langle \lambda,y \rangle=0.$$
	According to Remark \ref{KKT}, the system \eqref{KKT_Ht} leads to $y_t \in \Sol(\A_t,a)$. Remind that this assertion is true for all $t\in (0,1)$.
	
	Since $\Sol(\A,a)$ is nonempty bounded, let $V$ be a nonempty bounded open set containing $\Sol(\A,a)$. By the upper semicontinuity of $\Sol$ at $(\A,a)$, there exists $\delta>0$ such that $\Sol(\B,b)\subset V$ for all $(\B,b)\in\R^{[m,n]}\times\R^n$ satisfying $\|(\B,b)-(\A,a)\|<\delta$. Taking $t$ small enough such that $\|(\A_t,a)-(\A,a)\|<\delta$, we have $\Sol(\A_t,a)\subset V$. So, $y_t\in V$ for every $t>0$ sufficiently small. This is impossible, because $V$ is bounded and $y_t =t^{-1}y \to \infty$ as $t\to 0$. The proof is complete. \qed
\end{proof}

The main result of this section is shown in the following theorem.

\begin{theorem}
	The following two statements are equivalent:
	\begin{description}
		\item[\rm (a)] The tensor $\A$ is R$_0$;
		\item[\rm (b)] The map $\Sol$ is upper semicontinuous at $(\A,a)$, for every $a\in\R^n$ satisfying $\Sol(\A,a)\neq\emptyset$.
	\end{description}
\end{theorem}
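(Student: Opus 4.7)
The plan is to establish the equivalence $(a) \Leftrightarrow (b)$ by combining Propositions \ref{usc_1} and \ref{usc_2} with the finite-valuedness result of Remark \ref{generic_5}. The forward implication $(a) \Rightarrow (b)$ is exactly Proposition \ref{usc_1}, so nothing further is required there.

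The substantive direction is $(b) \Rightarrow (a)$, and the strategy is to reduce it to Proposition \ref{usc_2} by exhibiting a single parameter $a^* \in \R^n$ at which $\Sol(\A,a^*)$ is both nonempty and bounded. Once such an $a^*$ is found, hypothesis (b) applies at $(\A,a^*)$ and yields upper semicontinuity of $\Sol$ there; Proposition \ref{usc_2} then delivers directly the conclusion that $\A$ is R$_0$.

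To produce $a^*$, I would invoke Remark \ref{generic_5}: there exists a generic semi-algebraic set $\Sa_{\A} \subset \R^n$ on which the map $\Sol_{\A}$ is finite-valued. By the equivalence in Remark \ref{dense}, genericity implies density, so $\Sa_{\A}$ is dense in $\R^n$; since $\inte \R^n_+$ is a nonempty open set, the intersection $\Sa_{\A} \cap \inte \R^n_+$ is nonempty. Fix any $a^* \in \Sa_{\A} \cap \inte \R^n_+$. Because $a^* \geq 0$, the vector $0$ belongs to $\Sol(\A,a^*)$, so the solution set is nonempty; and because $a^* \in \Sa_{\A}$, the cardinality of $\Sol(\A,a^*) = \Sol_{\A}(a^*)$ is finite, hence the set is bounded. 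Both hypotheses of Proposition \ref{usc_2} are now in place, and the conclusion follows.

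The main conceptual step---and the only nontrivial one---is recognizing that Remark \ref{generic_5} supplies exactly the bounded-solution parameter needed to activate Proposition \ref{usc_2}. The naive attempt of choosing $a^* = 0$ fails because $\Sol(\A,0)$ is bounded only when $\A$ is already R$_0$, which is precisely what we are trying to conclude; the genericity argument bypasses this apparent circularity by producing bounded-valued parameters arbitrarily close to the origin inside $\inte \R^n_+$.
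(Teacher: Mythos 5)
Your proposal is correct and follows essentially the same route as the paper: the forward implication is Proposition \ref{usc_1}, and the reverse implication picks a parameter $a^*\in\R^n_+$ from the generic (hence dense) set of Remark \ref{generic_5} so that $\Sol(\A,a^*)$ is nonempty (as $0$ solves it) and finite, hence bounded, after which Proposition \ref{usc_2} applies. Your write-up merely makes explicit the density argument that the paper leaves implicit.
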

\begin{proof} Proposition \ref{usc_1} shows that $\rm(b)$ follows $\rm(a)$. Hence, we need only prove the direction $\rm(b) \Rightarrow (a)$.  Note that $0\in\Sol(\A,a)\neq\emptyset$ for every $a\in\R^n_+$. According to Remark \ref{generic_5}, then there exists $q\in\R^n_+$ such that $\Sol(\A,a)$ is bounded. By assumptions,  the map $\Sol$ is upper semicontinuous at $(\A,a)$. Proposition~\ref{usc_2} says that $\A$ is  R$_0$. The proof is complete. \qed
\end{proof}

\subsection{Single-valued continuity}
Recall that $\TCP(\A,a)$ is said to have the \textit{GUS-property} if $\TCP(\A,a)$ has a unique solution for every $a\in\R^n$. Some special structured tensors which have GUS-property are shown in \cite{BHW2016,LLW2017}. A new property of the GUS-property of tensor complementarity problems is given in the following theorem. 

\begin{theorem}
	 If $\TCP(\A,a)$ has the GUS-property, 
	then the map $\Sol_{\A}$ is single-valued continuous on $\R^n$.
\end{theorem}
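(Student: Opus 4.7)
The plan is to reduce continuity of the single-valued map $\Sol_{\A}$ to the upper semicontinuity result already established in Corollary \ref{SolA_usc}. First I would observe that the GUS-property has two immediate consequences: (i) for every $a \in \R^n$ the set $\Sol(\A,a)$ is nonempty (in fact, a singleton), so the map $\Sol_{\A}$ is a genuine single-valued function defined on all of $\R^n$; and (ii) specializing to $a=0$, together with the well-known fact that $0 \in \Sol(\A,0)$, the uniqueness forces $\Sol(\A,0) = \{0\}$, which is exactly the definition of $\A$ being an R$_0$--tensor.

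Next I would apply Corollary \ref{SolA_usc}: since $\A$ is R$_0$ and $\Sol(\A,a) \neq \emptyset$ for every $a \in \R^n$, the set-valued map $\Sol_{\A}$ is upper semicontinuous at every point $a \in \R^n$.

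Finally, I would translate set-valued upper semicontinuity of a single-valued map into ordinary continuity. Write $\Sol_{\A}(a) = \{f(a)\}$. Upper semicontinuity at $\bar a$ says that for every open set $V \subset \R^n$ containing $f(\bar a)$ there is a neighborhood $U$ of $\bar a$ such that $f(a) \in V$ for every $a \in U$; this is precisely the $\varepsilon$--$\delta$ definition of continuity of $f$ at $\bar a$. Hence $\Sol_{\A}$ is single-valued continuous on all of $\R^n$.

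There is essentially no obstacle here, since all the heavy lifting has already been done in the previous subsection; the argument is a direct concatenation of (GUS $\Rightarrow$ R$_0$), (R$_0$ $+$ nonempty values $\Rightarrow$ upper semicontinuity, from Corollary \ref{SolA_usc}), and (single-valued $+$ upper semicontinuous $\Rightarrow$ continuous). The only slightly delicate point worth spelling out in the proof itself is that $\Sol(\A,0) = \{0\}$, which requires invoking both the GUS uniqueness and the observation that $0$ always solves $\TCP(\A,0)$.
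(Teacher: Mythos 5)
Your argument is correct and follows the paper's proof exactly: GUS forces $\Sol(\A,0)=\{0\}$ (since $0$ always solves $\TCP(\A,0)$), hence $\A$ is R$_0$, then Corollary \ref{SolA_usc} gives upper semicontinuity, and a single-valued upper semicontinuous map is continuous. You actually spell out the step $\Sol(\A,0)=\{0\}$ more explicitly than the paper does, which is a small improvement in clarity.
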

\begin{proof} By assumptions, $\TCP(\A,0)$ has a unique solution. This implies that $\A$ is an R$_0$--tensor. Corollary \ref{SolA_usc} shows that $\Sol_{\A}$ is upper semicontinuous on $\R^n$. Therefore, $\Sol_{\A}$ is single-valued continuous on $\R^n$. \qed
\end{proof}

\begin{example}
	Consider the tensor complementarity  problem $\TCP(\A,a)$ where $\A\in\R^{[3,2]}$ given by  $a_{111}=a_{222}=1$ and all other $a_{i_1i_2i_3}=0$. Obviously, one has
$$\A x^{m-1}+q=\begin{bmatrix}
	x_1^2\\
	x_2^2
	\end{bmatrix}+\begin{bmatrix}
	a_1\\
	a_2
	\end{bmatrix},$$
	where the parameters $a_1,a_2\in\R$. 	An easy computation shows that
	$$\Sol_{\A}(a_1,a_2)=\left\{\begin{array}{cl}
	\left\lbrace(\sqrt{-a_1},\sqrt{-a_2})\right\rbrace & \text{ if } a_1<0, a_2<0 \\
	\left\lbrace (0,\sqrt{-a_2} )\right\rbrace & \text{ if } a_1\geq 0, a_2<0 \\
	\left\lbrace (\sqrt{-a_1},0)\right\rbrace & \text{ if } a_1< 0, a_2\geq0 \\
	\left\lbrace (0,0)\right\rbrace & \text{ if } a_1\geq 0, a_2\geq0 \\
	\end{array}\right.$$
The problem $\TCP(\A,a)$ has the GUS-property, the domain of $\Sol_{\A}$ is $\R^2$ and  $\Sol_{\A}$ is single-valued and continuous on $\R^2$. 
\end{example}

Recall that a tensor $\A$ is  \textit{copositive}
if  $\A x^m\geq 0$ for all $x \geq 0$.  A function $F:\R^n\to \R^n$ is 
\textit{monotone} on $X\subset\R^n$ if for all $x,y\in X$ the following inequality is satisfied
\begin{equation}\label{condition_monotone}
\left\langle F(y)-F(x),y-x\right\rangle \geq0.\end{equation}
If $F(x)=\A x^{m-1}$ is monotone on $\R^n_+$ then $\A$  is copositive. Indeed, by taking  $y=0$ in \eqref{condition_monotone}, $\A$ satisfies the copositive condition. 

\begin{remark}
	If the R$_0$--tensor $\A$ is copositive, then $\Sol(\A,a)$ is nonempty for every $q\in\R^n$ \cite[Corollary 7.2]{Gowda16}.
\end{remark}

\begin{theorem}
	Assume that $\A$  is an R$_0$--tensor. If $F(x)=\A x^{m-1}$ is monotone on $\R^n_+$, then the map $\Sol_{\A}$ is single-valued continuous on a generic semi-algebraic set in $ \R^{n}$. 
\end{theorem}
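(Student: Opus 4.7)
The strategy is to combine three facts already established in the paper---nonemptiness of $\Sol(\A,a)$ for every $a$, upper semicontinuity of $\Sol_{\A}$, and generic finite-valuedness---with one new ingredient, convexity of $\Sol(\A,a)$ under monotonicity.

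First, I would note that monotonicity of $F(x)=\A x^{m-1}$ on $\R^n_+$ forces $\A$ to be copositive, as observed in the excerpt just before the theorem. Combined with the R$_0$ hypothesis, the remark citing \cite[Corollary~7.2]{Gowda16} yields $\Sol(\A,a)\neq\emptyset$ for every $a\in\R^n$, so Corollary~\ref{SolA_usc} gives upper semicontinuity of $\Sol_{\A}$ on all of $\R^n$. Remark~\ref{generic_5} furthermore provides a generic semi-algebraic set $\Sa_{\A}\subset\R^n$ on which $\Sol_{\A}$ is finite-valued.

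The new step is to prove convexity of $\Sol(\A,a)$. For fixed $a$, write $G(x):=\A x^{m-1}+a$; this map is monotone and continuous on $\R^n_+$. Since $\R^n_+$ is a convex cone, $\Sol(\A,a)$ coincides with the Stampacchia solution set of $\VI(G,\R^n_+)$. For continuous monotone operators the Stampacchia and Minty solution sets agree---monotonicity gives one inclusion, and evaluating along a segment $x+t(y-x)$ and letting $t\to 0^+$ combined with continuity of $G$ gives the other. The Minty solution set
\[
\big\{x\in\R^n_+\;:\;\langle G(y),y-x\rangle\geq 0\ \text{for all}\ y\in\R^n_+\big\}
\]
is the intersection of $\R^n_+$ with a family of closed half-spaces, hence convex.

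Assembling the pieces: for every $a\in\Sa_{\A}$, the set $\Sol(\A,a)$ is simultaneously nonempty, finite, and convex, hence a singleton. Thus $\Sol_{\A}$ is single-valued on $\Sa_{\A}$. At any such $a$ with image $\{x_0\}$, upper semicontinuity applied to the open ball of radius $\varepsilon$ around $x_0$ produces a neighbourhood on which $\Sol_{\A}$ lies in that ball, which forces the single-valued image at nearby parameters in $\Sa_{\A}$ to stay within $\varepsilon$ of $x_0$; this is exactly continuity of the single-valued restriction. The main obstacle will be the convexity step---the nonlinearity of $\A x^{m-1}$ rules out the naive ``convex combination of solutions is a solution'' argument, and one must invoke the Minty--Stampacchia equivalence; the remainder of the proof only recycles earlier results of the paper.
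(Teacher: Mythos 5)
Your proof is correct and follows essentially the same route as the paper: copositivity from monotonicity plus the R$_0$ hypothesis gives nonemptiness via \cite[Corollary 7.2]{Gowda16}, generic finite-valuedness plus convexity of $\Sol(\A,a)$ forces single-valuedness, and upper semicontinuity from Corollary \ref{SolA_usc} yields continuity of the single-valued restriction. The only difference is that you re-derive the convexity of the solution set through the Minty--Stampacchia equivalence, whereas the paper simply cites \cite[Theorem 2.3.5]{FaPa03} for that fact; your derivation is the standard proof of that cited result, so the argument is the same in substance.
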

\begin{proof}  By the copositity and the R$_0$ property of $\A$, according to Corollary~7.2 in \cite{Gowda16}, one has $\Sol_{\A}(a)\neq\emptyset$ for all $a\in\R^n$.
From Theorem \ref{generic_4}, there exists a generic semi-algebraic set $\Sa\subset \R^{n}$ such that $\Sol_{\A}$ is finite-valued on $\Sa$.  
	
For every $a\in\R^n$, by the monotonicity of $F$, $F+a$ also is monotone. It follows that $\Sol_{\A}(a)$ is convex \cite[Theorem 2.3.5]{FaPa03}. Since $\Sol_{\A}(a)$ is nonempty and has finite points, it has an unique point. This implies that $\Sol_{\A}$ is single-valued on $\Sa$. Moreover, $\A$  is an R$_0$--tensor, Remark \ref{SolA_usc} implies that $\Sol_{\A}$ is upper semicontinuous on $\Sa$. From what has already been shown,  $\Sol_{\A}$ is single-valued continuous on $\Sa$. \qed
\end{proof}

\begin{example}
	Consider the tensor complementarity  problem $\TCP(\A,a)$ where $\A\in\R^{[3,2]}$ given by  $a_{111}=a_{122}=1$, $a_{211}=a_{222}=1$ and all other $a_{i_1i_2i_3}=0$. Obviously, one has
	$$F(x)=\A x^{m-1}+a=\begin{bmatrix}
	x_1^2+x_2^2\\
	x_1^2+x_2^2
	\end{bmatrix}+\begin{bmatrix}
	a_1\\
	a_2
	\end{bmatrix},$$
	where the parameters $a_1,a_2\in\R$. The Jacobian matrix of $F$ is positive semidefinite on $\R^2_+$. Hence, $F$ is monotone on $\R^2_+$. From Remark \ref{KKT}, an easy computation shows that
	$$\Sol_{\A}(a_1,a_2)=\left\{\begin{array}{cl}
	\left\lbrace(0,\sqrt{-a_2})\right\rbrace  & \text{ if } a_2<0, a_2\leq a_1, \\
	\left\lbrace(\sqrt{-a_1},0)\right\rbrace & \text{ if } a_1<0, a_1\leq a_2, \\
	\left\lbrace(0,0)\right\rbrace& \text{ if } 0\leq a_1,0\leq a_2,\\
	S_{-a_1}& \text{ if } a_1=a_2<0, \\
	\end{array}\right.$$
	where $$S_{-a_1}=\{(x_1,x_2):x_1^2+x_2^2=a_1,x_1\geq 0, x_2\geq 0\}, \ a_1<0.$$ 
The tensor $\A$ is R$_0$ since $\Sol_{\A}(0,0)=\{(0,0)\}$. The map $\Sol_{\A}$ is single-valued continuous on the generic semi-algebraic set $\Sa$, where
$$\Sa=\R^2\setminus \{q\in\R^2: a_1=a_2<0\}.$$
\end{example}

\section{Stability of the solution map}
This section discusses on the stability of the solution map of tensor complementarity problems. We will show that the map $\Sol$ is locally upper-H\"{o}lder when the involved tensor is R$_0$. In addition, if the tensor is copositive then one obtains a result on the stability of the solution map.

Recall that the map $\Sol_{\A}$ defined in \eqref{Sol_A} is said to be \textit{locally upper-H\"{o}lder} at ${a}$ if there exist $\gamma>0,c>0$ and $\varepsilon>0$ such that 
$$\Sol_{\A}(b)\subset \Sol_{\A}(a)+\gamma\|b-a\|^{c}\Ba(0,1)$$ for all $a$ satisfying $\|b- a\|< \varepsilon$, where $\Ba(0,1)$ is the closed unit ball in $\R^n$.

\begin{proposition}
	If $\A$ is R$_0$ and $\Sol(\A,a)\neq\emptyset$, then the map $\Sol_{\A}$ is  locally upper-H\"{o}lder at $a$.
\end{proposition}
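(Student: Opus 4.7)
The plan is to combine the local boundedness from Theorem~\ref{bounded1} with the classical {\L}ojasiewicz inequality for semi-algebraic functions. Since $\A$ is R$_0$, Theorem~\ref{bounded1} furnishes $\delta_0>0$ and $M>0$ such that $\Sol_\A(b)\subset\Ba(0,M)$ whenever $\|b-a\|\le\delta_0$. The graph $\gph\Sol_\A$ is semi-algebraic, being cut out by the polynomial (in)equalities characterising $\TCP(\A,b)$, and it is closed by continuity of $(b,x)\mapsto\A x^{m-1}+b$. Consequently
$$G:=\gph\Sol_\A\,\cap\,\bigl(\Ba(a,\delta_0)\times\Ba(0,M)\bigr)$$
is a compact semi-algebraic subset of $\R^n\times\R^n$.

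On $G$ I would introduce the two continuous semi-algebraic functions $f(b,x):=\|b-a\|$ and $g(b,x):=\mathrm{dist}\bigl(x,\Sol_\A(a)\bigr)$, noting that the distance to a semi-algebraic set is semi-algebraic via the Tarski--Seidenberg theorem and that $\Sol_\A(a)$ is closed as a fibre of the closed graph. If $(b,x)\in G$ satisfies $f(b,x)=0$, then $b=a$ and $x\in\Sol_\A(a)$, whence $g(b,x)=0$; hence $\{f=0\}\cap G\subset\{g=0\}\cap G$. The {\L}ojasiewicz inequality on the compact semi-algebraic set $G$ then delivers constants $\kappa>0$ and $N\in\mathbb{N}$ with
$$g(b,x)^{N}\le\kappa\,f(b,x)\qquad\text{for every }(b,x)\in G.$$

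Unwinding this estimate, for every $b$ with $\|b-a\|\le\delta_0$ and every $x\in\Sol_\A(b)$ (which lies in $\Ba(0,M)$ by the choice of $\delta_0, M$), we obtain $\mathrm{dist}(x,\Sol_\A(a))\le\kappa^{1/N}\|b-a\|^{1/N}$. Setting $\gamma:=\kappa^{1/N}$, $c:=1/N$, and $\varepsilon:=\delta_0$ produces exactly the upper-H\"older inclusion required. The only delicate point is the clean invocation of the {\L}ojasiewicz inequality: it demands a compact semi-algebraic domain and a containment of zero sets in the correct direction, both of which are arranged by the R$_0$ hypothesis through Theorem~\ref{bounded1} together with closedness of $\Sol_\A(a)$. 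The exponent $1/N$ is existential rather than explicit, which is consistent with the definition of upper-H\"older merely requiring some positive exponent.
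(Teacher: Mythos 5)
Your proof is correct, but it takes a genuinely different route from the paper's. The paper disposes of the proposition in two lines: Corollary~\ref{SolA_usc} gives upper semicontinuity of $\Sol_{\A}$ at $a$ from the R$_0$ hypothesis and the nonemptiness of $\Sol(\A,a)$, and then \cite[Theorem 4.1]{LLP2018} is invoked as a black box asserting that, for semi-algebraic solution maps of this kind, upper semicontinuity at $a$ is equivalent to local upper-H\"older stability at $a$. You instead prove the needed implication directly: Theorem~\ref{bounded1} confines the relevant piece of the closed semi-algebraic graph to a compact semi-algebraic set $G$, and the {\L}ojasiewicz inequality applied on $G$ to $f(b,x)=\|b-a\|$ and $g(b,x)=\mathrm{dist}(x,\Sol_{\A}(a))$ --- with the zero-set containment $\{f=0\}\cap G\subset\{g=0\}\cap G$ checked in the correct direction --- yields the estimate with exponent $1/N$. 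The details hold up: the hypothesis $\Sol(\A,a)\neq\emptyset$ is exactly what makes $g$ well defined, the distance to the closed set $\Sol_{\A}(a)$ is attained so the inclusion into $\Sol_{\A}(a)+\gamma\|b-a\|^{c}\Ba(0,1)$ follows, and semi-algebraicity of the distance function is justified by Tarski--Seidenberg. In effect you have reproved the relevant direction of \cite[Theorem 4.1]{LLP2018} in this special case. The paper's route buys brevity and reuse of Corollary~\ref{SolA_usc}; yours buys self-containedness, bypasses upper semicontinuity as an intermediate notion, and makes the provenance of the H\"older exponent explicit, at the cost of importing the {\L}ojasiewicz inequality, which the paper's preliminaries do not state.
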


\begin{proof} By assumptions, Corollary \ref{SolA_usc} claims that  $\Sol_{\A}$ is upper semicontinuous at $a$. According to \cite[Theorem 4.1]{LLP2018}, the upper semicontinuity and the local upper-H\"{o}lder stability of $\Sol_{\A}$ at $a$ are equivalent. Hence, the assertion is proved.	\qed
\end{proof}

Let $C$ be a nonempty closed cone. Here $\inte C^+$ stands for the interior of the dual cone $C^+$ of $C$. Note that
$q\in\inte C^+$ if and only if 
$\left\langle v,q \right\rangle >0$ for all $v\in C$ and $v\neq 0$ \cite[Lemma 6.4]{LTY2005}.

\begin{proposition}\label{Holder2} If $\A$ is copositive and $a\in\inte(\Sol(\A,0)^+)$, then
	the map $\Sol_{\A}$ is  locally upper-H\"{o}lder at ${a}$.
\end{proposition}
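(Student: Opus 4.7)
The plan is to imitate the preceding proposition: derive local upper-H\"{o}lder stability at $a$ from upper semicontinuity of $\Sol_{\A}$ at $a$ together with $\Sol(\A,a)\neq\emptyset$, via \cite[Theorem 4.1]{LLP2018}. In the previous proof the R$_0$ hypothesis gave both of these ingredients; here I must obtain them instead from copositivity of $\A$ and $a\in\inte(\Sol(\A,0)^+)$.

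For nonemptiness of $\Sol(\A,a)$, I plan to appeal to the general existence theorem for polynomial complementarity problems in \cite{Gowda16}, which under copositivity of the leading homogeneous part combined with $a\in\inte(\Sol(\A,0)^+)$ yields a nonempty (and compact) solution set. I expect this to be the main obstacle, since R$_0$ is no longer available: for instance, $\A=\Oo$ is copositive with $\Sol(\Oo,0)^+=\R^n_+$, so every strictly positive $a$ satisfies the hypotheses even though $\Oo$ is not R$_0$. Hence the R$_0$-based Corollary~7.2 of \cite{Gowda16}, used elsewhere in this section, no longer applies, and a genuinely more general existence result must be invoked.

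For upper semicontinuity at $a$ I would repeat the contradiction scheme of Proposition~\ref{usc_1}. Supposing USC fails, I extract an open $V\supset\Sol(\A,a)$, a sequence $a^k\to a$, and $x^k\in\Sol(\A,a^k)\setminus V$. The bounded case is standard, giving a cluster point in $\Sol(\A,a)\subset V$ that contradicts $x^k\notin V$ and openness of $V$. In the unbounded case, normalize so that $x^k/\|x^k\|\to\bar x$ with $\|\bar x\|=1$; rescaling the TCP relations by appropriate powers of $\|x^k\|$ produces $\bar x\in\Sol(\A,0)\setminus\{0\}$ in the limit. Here the new hypotheses enter: copositivity applied to the orthogonality identity $\A(x^k)^m=-\langle x^k,a^k\rangle$ forces $\langle x^k,a^k\rangle\leq 0$, so dividing by $\|x^k\|$ and letting $k\to\infty$ yields $\langle\bar x,a\rangle\leq 0$. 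This directly contradicts the characterization $\langle v,a\rangle>0$ for every nonzero $v\in\Sol(\A,0)$ supplied by $a\in\inte(\Sol(\A,0)^+)$.

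Once nonemptiness and upper semicontinuity are in place, \cite[Theorem 4.1]{LLP2018} closes the argument, producing the local upper-H\"{o}lder stability exactly as in the preceding proof.
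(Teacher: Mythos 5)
Your proposal is correct and follows essentially the same route as the paper: reduce to upper semicontinuity via \cite[Theorem 4.1]{LLP2018}, obtain nonemptiness and compactness of $\Sol(\A,a)$ from the copositive existence result \cite[Corollary 7.3]{Gowda16} (exactly the ``more general existence result'' you anticipated needing in place of the R$_0$-based Corollary 7.2), and run the contradiction argument for upper semicontinuity. In fact your handling of the unbounded case --- normalizing to get $\bar x\in\Sol(\A,0)\setminus\{0\}$ and using copositivity to force $\langle \bar x,a\rangle\le 0$, contradicting $a\in\inte(\Sol(\A,0)^+)$ --- is more explicit than the paper, which merely asserts boundedness of $\{x^k\}$ ``by repeating the argument of Proposition~\ref{usc_1}'' even though that argument relied on the R$_0$ hypothesis; the detail you supply is precisely the one the paper only spells out later, in part (a) of its final stability theorem.
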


\begin{proof} Since the upper semicontinuity and the local upper-H\"{o}lder stability of $\Sol_{\A}$ at $a$ are equivalent \cite[Theorem 4.1]{LLP2018}, we need only to prove that  $\Sol_{\A}$ is upper semicontinuous at $a$.
	
Since $\A$ is copositive and $a\in\inte(\Sol(\A,0)^+)$, $\Sol(\A,a)$ is nonempty compact \cite[Corollary 7.3]{Gowda16}. We suppose that $\Sol_{\A}$ is not upper semicontinuous at $a$. There is a nonempty open set $V$ containing $\Sol(\A,a)$ such that, for every integer number $k\geq 1$, there exists $q^k\in \R^n$ satisfying $q^k\to q$ and
	\begin{equation}\label{V_open1}
	x^k\in\Sol(K,P+q^k)\setminus V.
	\end{equation}
	By repeating the argument in the proof of Proposition \ref{usc_1}, we can prove that the sequence $\{x^k\}$ is bounded.	So, without loss of generality we assume that  $x^k\to \bar x$.  It easy to check that  $\bar x\in\Sol(\A,a)$. This leads to $\bar x\in V$. Besides, since $V$ is an open nonempty set, the relation \eqref{V_open1} implies that $\bar x\notin V$. 
	This is a contradiction.  Therefore, $\Sol_{\A}$ is upper semicontinuous at $a$. \qed	
\end{proof}

Theorem 7.5.1 in \cite{CPS1992} shown an interesting result on the stability of the solution map of linear complementarity problems under the copositive condition. Here, we obtain an analogical one for the solution map of tensor complementarity problems. 

\begin{theorem} If $a\in\inte(\Sol(\A,0)^+)$, then there exist constants $\varepsilon>0,\gamma>0$ and $c>0$ such that, if $\B\in\R^{[m,n]}$ and $b\in\R^n$ satisfying
	\begin{equation*}\label{epsilon}
	\max\{\|\B-\A\|,\|b- a\|\}<\varepsilon,
	\end{equation*}
	and $\B$  is copositive, then the following statements hold:
	\begin{description}
	\item[\rm (a)] 	The set $\Sol(\B,a)$ is nonempty and bounded;
	\item[\rm (b)] 	One has
		\begin{equation}\label{ell}\Sol(\B,b)\subset \Sol(\A,a)+\gamma(\|\B-\A\|+\|b-a\|)^{c}\Ba(0,1).	\end{equation}
\end{description}	
\end{theorem}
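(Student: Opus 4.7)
The plan is to show that on a neighborhood of $(\A,a)$ restricted to copositive $\B$, the set-valued map $(\B,b)\mapsto\Sol(\B,b)$ is upper semicontinuous at $(\A,a)$, and then use the semi-algebraic framework of \cite{LLP2018} to promote this to the H\"older inclusion \eqref{ell}. The crucial first step is to show that there exists $\varepsilon_0>0$ such that $a\in\inte(\Sol(\B,0)^+)$ whenever $\B$ is copositive and $\|\B-\A\|<\varepsilon_0$. By contradiction, suppose there are copositive $\B^k\to\A$ and $v^k\in\Sol(\B^k,0)\setminus\{0\}$ with $\|v^k\|=1$ and $\langle a,v^k\rangle\leq 0$ (using the dual-cone characterization from \cite[Lemma 6.4]{LTY2005}). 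Extracting $v^k\to\bar v$ and passing to the limit in \eqref{TCP} yield $\bar v\in\Sol(\A,0)\setminus\{0\}$ with $\langle a,\bar v\rangle\leq 0$, contradicting the hypothesis.

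With the openness in hand, Corollary 7.3 of \cite{Gowda16}, applied to copositive $\B$ with $a\in\inte(\Sol(\B,0)^+)$, gives part (a): $\Sol(\B,a)$ is nonempty and compact. A uniform boundedness argument in the spirit of Theorem~\ref{bounded1}, combined with the continuity of \eqref{TCP}, then ensures that $\Sol(\A,a)$ itself is nonempty and compact, so that the inclusion \eqref{ell} is non-vacuous.

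Next I would establish upper semicontinuity of $(\B,b)\mapsto\Sol(\B,b)$ at $(\A,a)$, restricted to copositive $\B$. Given an open $V\supset\Sol(\A,a)$, suppose for contradiction that there are copositive $\B^k\to\A$, $b^k\to a$, and $x^k\in\Sol(\B^k,b^k)\setminus V$. Copositivity of $\B^k$ together with the complementarity condition give $\langle x^k,b^k\rangle=-\B^k(x^k)^m\leq 0$. If $\{x^k\}$ were unbounded, normalizing $x^k/\|x^k\|\to\bar x$ with $\|\bar x\|=1$ would yield $\bar x\in\Sol(\A,0)$ and $\langle a,\bar x\rangle\leq 0$, contradicting $a\in\inte(\Sol(\A,0)^+)$. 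Hence $\{x^k\}$ is bounded, and any cluster point lies in $\Sol(\A,a)\subset V$, a contradiction.

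Finally, to produce the H\"older constants $\gamma,c$ in \eqref{ell}, I invoke \cite[Theorem 4.1]{LLP2018}, which equates upper semicontinuity of a semi-algebraic set-valued map with local upper-H\"older stability. Applied to the two-parameter map with parameter $(\B,b)$ restricted to the semi-algebraic subset of copositive tensors in $\R^{[m,n]}$ paired with $\R^n$, this produces uniform $\varepsilon,\gamma,c>0$ witnessing \eqref{ell}. The main obstacle is that the cited theorem is stated for a one-vector parameter: one must either verify that its proof, which rests on the Lojasiewicz inequality for semi-algebraic functions, adapts transparently to a two-parameter semi-algebraic setting, or recast $(\B,b)$ as a single vector in $\R^{n^m+n}$ with the copositivity constraint folded into the semi-algebraic parameter domain. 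Once this is arranged, the four steps combine cleanly to yield (a) and (b).
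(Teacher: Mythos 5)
Your Steps 1--3 are sound and essentially reproduce what the paper does for part (a): the paper likewise argues by contradiction that if $\Sol(\B^k,b^k)$ were empty or unbounded for copositive $\B^k\to\A$, $b^k\to a$, then by Gowda's Corollary 7.3 one could pick $x^k\in\Sol(\B^k,0)\setminus\{0\}$ with $\langle x^k,b^k\rangle\le 0$, normalize, and pass to the limit to contradict $a\in\inte(\Sol(\A,0)^+)$. Your separate observation that $a\in\inte(\Sol(\B,0)^+)$ for all copositive $\B$ near $\A$ is a clean repackaging of the same normalization argument, and your upper-semicontinuity step is correct.

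The gap is in Step 4. You derive the H\"older inclusion \eqref{ell} by applying \cite[Theorem 4.1]{LLP2018} to the two-parameter map $(\B,b)\mapsto\Sol(\B,b)$, and you flag the obstacle yourself, but neither of your two workarounds closes it: that theorem concerns the canonical translation perturbation $a\mapsto\Sol(K,f+a)$, so merely flattening $(\B,b)$ into a single vector in $\R^{n^m+n}$ does not bring the map within its hypotheses (the tensor does not enter as an additive constant), and re-proving the Łojasiewicz-based argument for a general semi-algebraic parameter is a nontrivial piece of work that you have not carried out. The paper sidesteps this entirely with a reduction that you are missing: for each $z\in\Sol(\B,b)$ set $\hat b=b+(\B-\A)z^{m-1}$; then $\A z^{m-1}+\hat b=\B z^{m-1}+b$, so $z\in\Sol(\A,\hat b)$, i.e.\ the tensor perturbation has been converted into a perturbation of the vector alone. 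Since $\Sol(\B,b)$ is bounded uniformly for $(\B,b)$ near $(\A,a)$ (your part (a)), Remark \ref{norm_beta} gives $\|(\B-\A)z^{m-1}\|\le\beta\|\B-\A\|$, hence $\|\hat b-a\|\le\|b-a\|+\beta\|\B-\A\|$ is small, and the one-parameter local upper-H\"older property of $\Sol_{\A}$ at $a$ (Proposition \ref{Holder2}, which is exactly where \cite[Theorem 4.1]{LLP2018} is legitimately invoked) yields $x\in\Sol(\A,a)$ with $\|z-x\|\le\gamma_0\|\hat b-a\|^{c}\le\gamma(\|b-a\|+\|\B-\A\|)^{c}$. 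You should replace your Step 4 with this substitution argument, or else actually supply a proof of the two-parameter H\"older stability rather than asserting it.
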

\begin{proof} $\rm (a)$ We prove that there exists $\varepsilon_1>0$ such that if $\B$  is copositive and $ b\in\R^n$ satisfying \begin{equation}\label{eps_1}
	\max\{\|\B-\A\|,\|b- a\|\}<\varepsilon_1
\end{equation}	 
	then $\Sol(\B,b)$ is nonempty and bounded. Suppose that the assertion is false. There exists a sequence $(\B^k,b^k)$, where $(\B^k,b^k)\to (\A,a)$ and $\B^k$ is copositive,
such that $\Sol(\B^k,b^k)$ is empty or unbounded, for each $k\in\mathbb{N}$. 
From \cite[Corollary 7.3]{Gowda16}, we conclude that $b^k\notin\inte(\Sol(\B^k,0)^+)$. This implies that there exists $x^k\in\Sol(\B^k,0)$ such that $x^k\neq 0$ and $\left\langle x^k,b^k \right\rangle\leq 0$. We can assume that $\|x^k\|^{-1}x^k\to\bar x\in\R^n_+$ with $\|\bar x\|=1$.

Clearly, since $\left\langle x^k,b^k \right\rangle\leq 0$ for each $k\in\mathbb{N}$, one has $\left\langle \bar x,a \right\rangle\leq 0$.
If we prove that $\bar x$ solves $\TCP(\A,0)$, then this contradicts to the assumption that $a\in\inte(\Sol(\A,0)^+)$, and $\rm(a)$ is proved. Thus, we only need to show that $\bar x\in \Sol(\A,0)$. Because $x^k$ belongs to $\Sol(\B^k,0)$, one has
\begin{equation}\label{x_k}
\B^k (x^k)^{m-1}\geq 0, \ \B^k (x^k)^{m} =0.
\end{equation}
By dividing the inequality and the equation in \eqref{x_k} by $\|x^k\|^{m-1}$ and $\|x^k\|^{m}$, respectively, and taking $k\to+\infty$, we obtain 
$$\A \bar x^{m-1}\geq 0, \  \A \bar x^{m}= 0.$$
This implies that $\bar x\in \Sol(\A,0)$.
	
$\rm (b)$ We prove the inclusion \eqref{ell}. According to Proposition \ref{Holder2}, there exist $\gamma_0>0,c>0$ and $\varepsilon$ such that 
	\begin{equation}\label{ell0}
	\Sol(\A,b)\subset \Sol(\A,a)+\gamma_0\|b-a\|^{c}\Ba(0,1)
	\end{equation}
	 for every $b$ satisfying $\|b-a\|< \varepsilon$.
Suppose that $\B$  is copositive and $b\in\R^n$ satisfying \eqref{eps_1}.
For each $z\in \Sol(\B,b)$, by setting 
\begin{equation}\label{q_hat} \hat b=b+\left(\B-\A\right) z^{m-1}, \end{equation}
we have
$$A z^{m-1}+ \hat b=\B z^{m-1}+b\geq 0, \ \;	\left\langle z, \A z^{m-1}+ \hat b\right\rangle =\left\langle z,\B z^{m-1}+b\right\rangle = 0.$$
These show that $z\in\Sol(\A,\hat b)$. Besides, since  $\Sol(\B,b)$ is bounded and nonempty, Remark~\ref{norm_beta} states that there exists $\beta>0$ such that 
	\begin{equation}\label{norm_ineq}
	\|(\B-\A) z^{m-1}\|\leq \beta\|\B-\A\|
	\end{equation}
	for any $z\in\Sol(\B,b)$.  
	From \eqref{eps_1}, \eqref{q_hat}, and  \eqref{norm_ineq}, one has
	$$\begin{array}{ll}
	\|\hat b-a\|   &\leq  \; \|b-a\|+\|(\B-\A)z^{m-1}\| \smallskip \\ 
	&\leq  \; \|b-a\|+\beta \|\B-\A\| \smallskip \\ 
	&\leq \; (1+\beta)\varepsilon_1.
	\end{array}$$
Choosing $\varepsilon_1$ small enough such that $\|\hat b-a\|<\varepsilon$, if necessary.	From \eqref{ell0} and \eqref{norm_ineq}, there exists $x\in\Sol(\A,a)$ such that
	$$\begin{array}{rl}
	\|z-x\| \; & \leq \; \gamma_0\|\hat b-a\|^{c} \smallskip\\
	 &\leq  \; \gamma_0\left(\|b-a\|+\beta \|\B- \A\| \right)^{c} \smallskip    \\ 
	& \leq  \; \gamma \left(\|b-a\|+\|\B-\A\| \right)^{c},
	\end{array} $$
	where $\gamma=\max\left\lbrace \gamma_0^{c},\gamma_0^{c}\beta\right\rbrace $. Then the  inclusion \eqref{ell} is obtained. \qed
\end{proof}

\section{Conclusions} In this paper, we have proved that the set $\Ro$ of all R$_0$--tensors is an open generic semi-algebraic cone . Upper and lower estimates for the dimension of the complement $C(\Ro)$ are shown. Several results on local boundedness, upper semicontinuity and stability of the solution map have been obtained. In our further research, we intend to develop these results for polynomial complementarity problems and semi-algebraic variational inequalities.

\begin{acknowledgements}
The author would like to express his deep gratitude to Professor Nguyen Dong Yen for the enthusiastic encouragement.
\end{acknowledgements}


\end{document}